\documentclass[a4paper,10pt]{article}
\usepackage[frenchb]{babel}
\usepackage[T1]{fontenc}
\usepackage[utf8]{inputenc}
\usepackage{amsmath}
\usepackage{array}
\usepackage{amsthm}
\usepackage{amssymb}
\input xy
\xyoption{all}

\newcommand{\A}{{\mathcal{A}}}
\newcommand{\B}{{\mathcal{B}}}

\newcommand{\F}{{\mathcal{F}}}
\newcommand{\fct}{\mathbf{Fct}}
\newcommand{\add}{\mathbf{Add}}

\newcommand{\Hh}{{\mathcal{H}}}
\newcommand{\E}{{\mathcal{E}}}

\newcommand{\Pol}{\fct}

\newcommand{\col}{{\rm colim}\,}

\title{Groupes d'extensions et foncteurs polynomiaux\\
{\em {\large D\'edi\'e \`a Teimuraz Pirashvili \`a l'occasion de son soixanti\`eme anniversaire}}}

\author{Aur\'elien DJAMENT\thanks{CNRS, Laboratoire de mathématiques Jean Leray (UMR 6629), aurelien.djament@univ-nantes.fr}}

\newtheorem{thm}{Th\'eor\`eme}[section]
\newtheorem{pr}[thm]{Proposition}

\newtheorem{lm}[thm]{Lemme}

\theoremstyle{definition}
\newtheorem{defi}[thm]{D\'efinition}

\newtheorem{hyp}[thm]{Hypoth\`ese}

\theoremstyle{remark}
\newtheorem{rem}[thm]{Remarque}
\newtheorem{ex}[thm]{Exemple}

\begin{document}

\maketitle

\begin{abstract}
Généralisant le travail de Pirashvili \cite{P-extor}, nous caractérisons les petites catégories additives $\A$ telles que l'inclusion dans la catégorie des foncteurs de $\A$ vers les groupes abéliens de la sous-catégorie pleine des foncteurs analytiques induise un isomorphisme entre groupes d'extensions.
\end{abstract}

\begin{small}
\begin{center}
 \textbf{Abstract}

\smallskip

Extending Pirashvili's work \cite{P-extor}, we characterize small additive categories $\A$ such that the inclusion in the category of functors from $\A$ to abelian groups of the full subcategory of analytic functors induces an isomorphism between extension groups. 
\end{center}
\end{small}

\medskip

\noindent
{\em Mots clefs} : catégories de foncteurs ; foncteurs polynomiaux et foncteurs analytiques ; groupes d'extensions.

\smallskip

\noindent
{\em Classification MSC 2010} : 18A25, 18G15 (18A40, 18E05, 18E15, 18G10).


\section*{Introduction}

Dans toute catégorie de foncteurs d'une catégorie additive vers une catégorie de modules\,\footnote{Ces notions peuvent être définies dans un cadre plus général (tant pour la source que pour le but), mais les résultats et méthodes qu'on présente dans cet article sont spécifiques au cas d'une source additive et d'un but abélien.}, on dispose d'une notion fondamentale d'effets croisés qui permet de définir les {\em foncteurs polynomiaux} ; les {\em foncteurs analytiques} sont les colimites de foncteurs polynomiaux (qu'on peut supposer filtrantes). Ces notions ont été introduites par Eilenberg et Mac Lane (cf. \cite{EML}) au début des années 1950 ; de fait, les groupes d'homologie singulière des espaces qui portent leur nom définissent des endofoncteurs polynomiaux $H_i(K(-,n);\mathbb{Z})$ des groupes abéliens, dont on peut estimer le degré en fonction des entiers $i$ et $n$. Depuis, l'importance de la notion de foncteur polynomial s'est confirmée en topologie algébrique (voir par exemple \cite{HLS} pour la mise en évidence d'un lien fort avec les modules sur l'algèbre de Steenrod), mais aussi en théorie des représentations (voir par exemple le survol \cite{Ku}) ou en $K$-théorie algébrique et homologie des groupes (voir par exemple l'appendice de \cite{FFSS}, ou \cite{Dja-JKT}).

L'un des succès de la théorie provient de ce que de nombreux calculs de groupes d'extensions entre foncteurs polynomiaux sont accessibles dans la catégorie de foncteurs considérée (cf. \cite{FFSS}), qui est une catégorie abélienne très régulière, avec assez d'objets projectifs et injectifs. La quasi-totalité des calculs sont menés dans la catégorie de {\em tous} les foncteurs (d'une catégorie additive raisonnable vers une catégorie de modules), même quand on ne traite que de foncteurs polynomiaux ou analytiques. Pourtant, les sous-catégories (pleines) de foncteurs polynomiaux de degré au plus $d$, où $d$ est un entier fixé, et la sous-catégorie des foncteurs analytiques sont également des catégories abéliennes fondamentales (dont on comprend plus aisément la structure globale que celle de la catégorie de tous les foncteurs) avec assez d'objets injectifs (et de projectifs dans le premier cas). La question de la comparaison des groupes d'extensions entre ces catégories, très naturelle, n'a été que peu étudiée. Le fait que, lorsque la catégorie but est celle des espaces vectoriels sur un corps de caractéristique nulle, tous ces groupes d'extensions coïncident est assez élémentaire et bien connu des experts, mais ne semble pas avoir été publié. Lorsqu'on travaille avec des espaces vectoriels sur un corps de caractéristique première, ou avec les groupes abéliens, au but, il n'en est plus de même : les extensions, dès le degré $2$ (dans le premier cas) ou $3$ (dans le second), entre deux foncteurs polynomiaux de degré $d$ font apparaître généralement des foncteurs de degré strictement supérieur à $d$. C'est essentiellement l'existence du morphisme de Frobenius qui permet un tel phénomène (cf. l'exemple classique~\ref{ex-cl} ci-après). Mais, {\em dans les cas usuels}, les groupes d'extensions entre deux foncteurs analytiques sont les mêmes dans la catégorie de tous les foncteurs et dans la catégorie des foncteurs analytiques, et les groupes d'extensions entre deux foncteurs polynomiaux coïncident dans la catégorie de tous les foncteurs et dans la catégorie des foncteurs polynomiaux de degré au plus $d$, pourvu que $d$ soit assez grand par rapport au degré des foncteurs considérés et au degré cohomologique. Le premier phénomène est bien connu, et facile à montrer (il provient de ce que toute fonction entre deux espaces vectoriels {\em finis} est polynomiale), dans les catégories de foncteurs entre espaces vectoriels (de dimension finie à la source) sur un corps fini. Le deuxième phénomène a été observé dans le cas particulier où les groupes abéliens de morphismes de la catégorie source sont sans torsion, et que l'un des arguments des groupes d'extensions est additif, par Pirashvili, dans \cite{P-extor}, qui donne également des bornes de stabilité explicites (et essentiellement optimales). Mais les autres cas semblaient largement ouverts. Signalons toutefois, encore dans le cas des foncteurs entre espaces vectoriels sur un corps fini, le calcul par Smith (non publié, mais mentionné et utilisé dans \cite{FrS}) des auto-extensions du foncteur identité dans les catégories des foncteurs polynomiaux de degrés donnés.

Dans le présent article, nous résolvons de façon générale le problème. Précisément, nous montrons que les groupes d'extensions entre foncteurs analytiques dans la catégorie de tous les foncteurs (d'une petite catégorie additive vers, disons, les groupes abéliens) ou dans la catégorie des foncteurs analytiques coïncident si et seulement si, pour tout nombre premier $p$, la torsion $p$-primaire des groupes abéliens de morphismes à la source est bornée, avec une condition d'uniformité partielle (voir le théorème~\ref{th-anag} pour l'énoncé précis). Nous donnons également des résultats de comparaison de groupes d'extensions entre catégories de foncteurs polynomiaux de degré donné et catégorie de tous les foncteurs.

\section{Énoncé des résultats}\label{sr}

Si $\A$ et $\B$ sont deux catégories, avec $\A$ (essentiellement) petite, on note $\fct(\A,\B)$ la catégorie des foncteurs de $\A$ vers $\B$. Si $\A$ et $\B$ sont des catégories additives, on note $\add(\A,\B)$ la sous-catégorie pleine de $\fct(\A,\B)$ constituée des foncteurs additifs. Si $k$ est un anneau, on note $k-\mathbf{Mod}$ la catégorie des $k$-modules à gauche ; on pose $\F(\A;k):=\fct(\A,k-\mathbf{Mod})$.

On rappelle que, si $\E$ est une catégorie abélienne, alors $\fct(\A,\E)$ est une catégorie abélienne pour toute catégorie $\A$ essentiellement petite ; si $\E$ possède assez d'injectifs ou de projectifs, ou est une catégorie de Grothendieck (on se placera généralement dans cette situation), il en est de même pour $\fct(\A,\E)$. On peut en particulier y faire de l'algèbre homologique.

Supposons que $\A$ est une catégorie additive (essentiellement petite) et $\E$ une catégorie abélienne. Pour tout entier $d$, on note $\Pol_d(\A,\E)$ la sous-catégorie pleine de $\fct(\A,\E)$ des foncteurs polynomiaux de degré au plus $d$. Cette notion remonte à Eilenberg-Mac Lane (\cite{EML}) dans le cas où source et but sont des catégories de modules ; les effets croisés et les foncteurs polynomiaux se définissent exactement de la même façon d'une catégorie additive vers une catégorie abélienne (voir par exemple \cite{P-extor} pour le cas où le but est une catégorie de modules, ou \cite{HPV}, où le cadre est encore plus général). Cette sous-catégorie est épaisse et stable par limites et colimites. On note $\F_d(\A;k):=\Pol_d(\A,k-\mathbf{Mod})$. On note $i_d : \Pol_d(\A,\E)\to\fct(\A,\E)$ le foncteur d'inclusion ; lorsque $\E$ est une catégorie de Grothendieck, ce foncteur possède un adjoint à droite qu'on note $p_d$ et un adjoint à gauche qu'on note $q_d$ (les mentions aux catégories source et but sont omises de ces notations). On considérera aussi beaucoup la catégorie des foncteurs analytiques $\Pol_\infty(\A,\E)$, c'est-à-dire la sous-catégorie pleine de $\fct(\A,\E)$ obtenue en saturant par colimites la réunion des $\Pol_d(\A,\E)$ (c'est une sous-catégorie épaisse de $\fct(\A,\E)$ --- cette propriété classique mais pas complètement formelle sera démontrée plus tard). On introduit des notations évidentes $\F_\infty(\A;k)$, $i_\infty$ et $p_\infty$ (en général, il n'y a plus de $q_\infty$ car $\Pol_\infty(\A,\E)$ n'est pas une sous-catégorie stable par limites dans $\fct(\A,\E)$).

Le but de cet article est de comparer les groupes d'extensions entre les catégories $\fct(\A,\E)$ et $\Pol_d(\A,\E)$, lorsque $\E$ est une catégorie de Grothendieck. (Dans le cas où $\E$ est une catégorie de modules sur un anneau commutatif, on pourrait tout aussi bien donner des résultats en termes de groupes de torsion.)

\begin{hyp}
 Dans toute la suite, $\A$ désigne une catégorie additive (essentiellement) petite.
\end{hyp}

Commençons par rappeler deux résultats classiques dans cette direction. Le premier d'entre eux est bien connu depuis longtemps des spécialistes des foncteurs polynomiaux, mais l'auteur du présent travail n'en connaît pas de référence écrite. Nous en rappellerons donc une démonstration ultérieurement.

\begin{thm}[Folklore]\label{th-fol}
 Supposons que $k$ est un corps de caractéristique nulle. Pour tous $d\in\mathbb{N}\cup\{\infty\}$ et tous foncteurs $F$, $G$ dans $\F_d(\A;k)$, le morphisme naturel
$${\rm Ext}^*_{\F_d(\A;k)}(F,G)\to {\rm Ext}^*_{\F(\A;k)}(i_d(F),i_d(G))$$
qu'induit le foncteur exact $i_d$ est un isomorphisme.
\end{thm}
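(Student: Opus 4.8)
The plan is to exploit the two features of characteristic zero that drive the statement — the semisimple splitting of polynomial functors into homogeneous components, and averaging over symmetric groups — and to reduce $i_d$ to the homogeneous inclusions before comparing homogeneous functors with multi-additive ones.

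\emph{Step 1 (dilations and reduction to one homogeneous degree).} For $N\in\mathbb{N}$ let $u_N\colon\mathrm{id}_{\fct(\A,\E)}\to\mathrm{id}_{\fct(\A,\E)}$ be the natural endomorphism whose component at a functor $F$ and an object $x$ is $F(N\cdot\mathrm{id}_x)$. These satisfy $u_1=\mathrm{id}$ and $u_Nu_M=u_{NM}$, and the cross-effect expansion of $F(x^{\oplus N})$ shows that for $F\in\Pol_d(\A,\E)$ the assignment $N\mapsto u_N(F)$ is a polynomial of degree $\le d$ in $N$ with values in $\mathrm{End}(F)$. Over a field of characteristic zero the Vandermonde data $(N^n)_{0\le N,n\le d}$ is invertible, so Lagrange interpolation of the $u_N$ yields orthogonal idempotents $e_0,\dots,e_d\in\mathrm{End}(F)$, natural in $F$, with $\sum_n e_n=\mathrm{id}$ and $u_N=\sum_n N^n e_n$; writing $\Hh_n:=e_n\Pol_d(\A,\E)$ for the full subcategory of functors homogeneous of degree $n$, naturality forces every morphism to be homogeneous, whence $\Pol_d(\A,\E)\simeq\prod_{n=0}^d\Hh_n$ and $\mathrm{Ext}^*_{\F_d(\A;k)}(F,G)=\bigoplus_n\mathrm{Ext}^*_{\Hh_n}(e_nF,e_nG)$. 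On the other side $u_N$ is central, so it acts by the same scalar through either variable on $\mathrm{Ext}^*_{\fct}(e_mF,e_nG)$; for $m\ne n$, taking $N=2$ gives $(2^m-2^n)\mathrm{Ext}^*=0$, so this group vanishes. Hence $\mathrm{Ext}^*_{\F(\A;k)}(F,G)$ also splits as $\bigoplus_n\mathrm{Ext}^*_{\fct}(e_nF,e_nG)$, and the theorem reduces to showing that $\Hh_n\hookrightarrow\fct(\A,\E)$ induces an isomorphism on $\mathrm{Ext}^*$ for each fixed $n$; the case $d=\infty$ then follows, both sides commuting with the filtered colimit over $d$ and every analytic functor being polynomial of some degree.

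\emph{Step 2 (homogeneous $=$ symmetric multi-additive).} The $n$-th cross-effect $\mathrm{cr}_n$ is exact and, in characteristic zero, restricts to an equivalence between $\Hh_n$ and the category of reduced symmetric multi-additive functors $\A^{\times n}\to\E$, i.e.\ the $\mathfrak{S}_n$-fixed points inside $\add(\A^{\times n},\E)$; the quasi-inverse is the diagonal followed by the symmetrizing idempotent $\frac1{n!}\sum_{\sigma}\sigma$, available precisely because $n!$ is invertible. This identifies $\mathrm{Ext}^*_{\Hh_n}(F,G)$ with the $\mathfrak{S}_n$-invariants of Ext-groups computed in $\add(\A^{\times n},\E)$ and, through a Künneth reduction, with Ext-groups of ordinary additive functors, both computations being manifestly compatible with the corresponding ones in $\fct$.

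\emph{Step 3 (the additive comparison — the main obstacle).} What remains, and what I expect to be the crux, is that the inclusion $\add(\A,\E)\hookrightarrow\fct(\A,\E)$ (and its multivariable analogue) is an $\mathrm{Ext}^*$-isomorphism in characteristic zero. Neither adjoint $q_d$ nor $p_d$ is exact, so this cannot be read off an adjunction directly; I would argue instead by a dimension-shifting $\delta$-functor argument. Since $\add(\A,\E)$ is thick and closed under subobjects, quotients and extensions, $\mathrm{Ext}^*_{\fct}(F,-)$ restricts to a cohomological $\delta$-functor on it agreeing in degree $0$ with $\mathrm{Ext}^*_{\add}(F,-)$, and the comparison is an isomorphism as soon as the injectives of $\add(\A,\E)$ are $\mathrm{Ext}^{>0}_{\fct}(F,-)$-acyclic for additive $F$. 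Equivalently, via the adjunction $q_1\dashv i_1$ and the Grothendieck spectral sequence $\mathrm{Ext}^p_{\add}(L_qq_1K,G)\Rightarrow\mathrm{Ext}^{p+q}_{\fct}(K,G)$, one must show that the total left-derived additivization of the higher-degree kernel $K=\ker\bigl(k[\A(a,-)]\twoheadrightarrow k\otimes_{\mathbb{Z}}\A(a,-)\bigr)$ vanishes. This is exactly where characteristic zero is essential: the dilation operators act on $K$ with strictly positive weights, so the central-operator argument of Step 1, applied termwise on a projective resolution, kills every higher derived contribution, and the Frobenius-type classes that obstruct the statement in positive characteristic cannot arise. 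Propagating this vanishing back through Steps 2 and 1 yields the desired isomorphism $\mathrm{Ext}^*_{\F_d(\A;k)}(F,G)\xrightarrow{\ \sim\ }\mathrm{Ext}^*_{\F(\A;k)}(i_d(F),i_d(G))$ for every $d\in\mathbb{N}\cup\{\infty\}$.
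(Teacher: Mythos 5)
You have correctly located the crux in your Step~3, but the argument you give for it does not work. The vanishing you need --- that ${\bf L}_{>0}q_1$ kills $K=\ker\bigl(k[\A(a,-)]\twoheadrightarrow k\otimes_{\mathbb{Z}}\A(a,-)\bigr)$, equivalently that ${\rm Ext}^*_{\F(\A;k)}(K,T)=0$ for $T$ additive --- cannot be extracted from the dilation operators, because $K$ is not polynomial and $u_N$ does not act on it semisimply: there is no eigenspace decomposition of $K$ into ``strictly positive weights''. Only the associated graded of $K$ for the augmentation filtration decomposes into weight spaces (its pieces are the $S^n(k\otimes\A(a,-))$ with $n\geq 2$, for which Pirashvili's lemma does give the vanishing), but that filtration is infinite and decreasing, $K$ is not complete for it, and ${\rm Ext}$ does not commute with the relevant limits, so nothing propagates from the graded pieces to $K$ itself. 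Concretely, the central-operator argument only tells you that the endomorphism of ${\rm Ext}^j(K,T)$ induced by $[v]\mapsto[2v]$ coincides with multiplication by $2$; to deduce vanishing you would need to know in advance that $2$ is not an eigenvalue of that endomorphism, which is circular. This vanishing is a genuine theorem --- for $n=1$ it is exactly the vanishing of the rational stable homology of Eilenberg--Mac Lane spaces in positive degrees --- and the paper proves it (Proposition~\ref{pz}) by resolving $\bar{\mathbb{Q}}[-]$ by tensor powers of itself, using that the rational homology of an abelian group is the exterior algebra on its rationalization, and then applying Lemma~\ref{lmpira}; some input of this kind is unavoidable.

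Two smaller points. In Step~1 the reduction of the case $d=\infty$ rests on the assertion that every analytic functor is polynomial of some degree, which is false (analytic means filtered colimit of polynomial functors), and passing ${\rm Ext}$ through these colimits in the contravariant variable is not formal (compare Proposition~\ref{pcol} and the noetherian discussion in Section~\ref{so}). Note also that the paper's route is quite different from your Steps~1--2: it never splits into homogeneous pieces, but shows that the dual stabilization complex $D^*_{(d)}$ applied to the injective cogenerators $Fct(-,\mathbb{Q})\circ\A(-,a)$ is acyclic in positive degrees and is carried by $p_d$ to an injective coresolution in $\F_d(\A;\mathbb{Q})$, whence the comparison by adjunction. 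Your homogeneous/multi-additive skeleton is a legitimate alternative, but it transfers, rather than removes, the analytic difficulty into the additive comparison of Step~3, which remains unproved.
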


(Dans la suite, pour alléger, nous omettrons souvent la notation des foncteurs d'inclusion $i_d$.)

\begin{rem}
 Il existe d'autres situations où la comparaison des groupes d'extensions entre catégorie de tous les foncteurs et sous-catégories de foncteurs polynomiaux de degrés donnés fonctionne de manière aussi favorable, par exemple les $\Gamma$-modules. On n'a même pas besoin, dans ce contexte, de faire une hypothèse de caractéristique nulle ou apparentée : l'équivalence de catégories de Pirashvili (\cite{PDK}) à la Dold-Kan entre $\Gamma$-modules et $\Omega$-modules (où le degré polynomial se lit de façon transparente, par l'annulation des valeurs du foncteurs sur les ensembles de cardinal strictement supérieur au degré) implique aussitôt le résultat.
\end{rem}

Dès que l'on est en caractéristique positive, le théorème précédent tombe grossièrement en défaut, comme l'illustre l'exemple très classique suivant.

\begin{ex}\label{ex-cl}
 Soient $p$ un nombre premier et $\F(p)$ la catégorie des foncteurs des $\mathbb{F}_p$-espaces vectoriels de dimension finie vers les $\mathbb{F}_p$-espaces vectoriels, on note ${\rm I}$ le foncteur d'inclusion, qui est polynomial de degré $1$. Il est classique (cf. \cite{FLS}) que ${\rm Ext}^2_{\F(p)}({\rm I},{\rm I})$ est isomorphe à $\mathbb{F}_p$, un générateur en étant donné par la classe de la suite exacte
$$0\to {\rm I}\to S^p\to\Gamma^p\to {\rm I}\to 0$$
dont la première flèche est le morphisme de Frobenius ($S^p$ désigne la $p$-ème puissance symétrique et $\Gamma^p$ la $p$-ème puissance divisée), la deuxième la norme et la troisième le morphisme de Verschiebung (dual du Frobenius).

Par ailleurs, pour $d<p$, il est classique que la sous-catégorie $\F_d(p)$ des foncteurs polynomiaux de degré (au plus) $d$ de $\F(p)$ est semi-simple (parce que les représentations des groupes symétriques $\Sigma_n$ à coefficients dans $\mathbb{F}_p$ sont semi-simples pour $n<d$ ; si l'on se restreint à $d=1$ l'assertion est de toute façon triviale), de sorte que ${\rm Ext}^2_{\F_d(p)}({\rm I},{\rm I})=0$ n'est alors pas isomorphe à ${\rm Ext}^2_{\F(p)}({\rm I},{\rm I})$. En revanche, pour $d\geq p$, le morphisme canonique ${\rm Ext}^2_{\F_d(p)}({\rm I},{\rm I})\to{\rm Ext}^2_{\F(p)}({\rm I},{\rm I})$ est un isomorphisme. En effet, c'est un toujours un monomorphisme parce que $\F_d(p)$ est une sous-catégorie {\em épaisse} de $\F(p)$, et il est clair avec ce qui précède que c'est un épimorphisme pour $d\geq p$, puisque $S^p$ et $\Gamma^p$ sont de degré $p$.
\end{ex}

En général, on ne peut donc pas s'attendre à mieux qu'à ce que le morphisme naturel
$${\rm Ext}^*_{\F_d(\A;k)}(F,G)\to {\rm Ext}^*_{\F(\A;k)}(i_d(F),i_d(G))$$
soit un isomorphisme en degré cohomologique $*$ suffisamment petit par rapport à $d$, avec une borne dépendant de conditions sur la torsion des groupes abéliens de morphismes dans la catégorie additive $\A$. (Pour $d=\infty$, on s'attend donc à un isomorphisme en tout degré cohomologique... mais nous verrons que ce n'est vrai que lorsque $\A$ est <<~assez gentille~>>.)

\smallskip

Le second théorème, beaucoup plus difficile, est dû à Pirashvili dans \cite{P-extor}. Nos résultats permettront de le généraliser.

\begin{thm}[Pirashvili]\label{th-pira}
 Soient $d,i\in\mathbb{N}$, $F$ et $G$ deux objets de $\F_d(\A;\mathbb{Z})$. Supposons que :
\begin{enumerate}
 \item les groupes abéliens $\A(a,b)$ sont tous sans torsion ;
\item $F$ ou $G$ est additif ;
\item $i\leq 2d$.
\end{enumerate}

Alors le morphisme canonique
$${\rm Ext}^i_{\F_d(\A;\mathbb{Z})}(F,G)\to {\rm Ext}^i_{\F(\A;\mathbb{Z})}(F,G)$$
est un isomorphisme.
\end{thm}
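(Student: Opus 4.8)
The plan is to reduce the statement to a vanishing property of the derived functors of the polynomial truncation, and to establish that property on the standard (co)generators. First, injectivity of the comparison morphism is automatic: $\F_d(\A;\mathbb{Z})$ being an épaisse subcategory of $\F(\A;\mathbb{Z})$, the argument recalled in Exemple~\ref{ex-cl} shows it is always a monomorphism. For bijectivity I would exploit the adjoints $q_d\dashv i_d\dashv p_d$. Since $p_d$ is right adjoint to the exact functor $i_d$ it preserves injectives, and one has ${\rm Hom}_{\F(\A;\mathbb{Z})}(i_dF,-)\cong{\rm Hom}_{\F_d(\A;\mathbb{Z})}(F,p_d-)$; the associated Grothendieck spectral sequence reads
$$E_2^{s,t}={\rm Ext}^s_{\F_d(\A;\mathbb{Z})}(F,R^tp_d\,i_dG)\Rightarrow {\rm Ext}^{s+t}_{\F(\A;\mathbb{Z})}(F,G),$$
where $R^0p_d\,i_dG\cong G$ because $i_d$ is fully faithful. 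Its edge homomorphism is precisely the map under study, and a routine inspection of the differentials shows it is an isomorphism in every cohomological degree $\leq N$ as soon as $R^tp_d(i_dG)=0$ for $1\leq t\leq N$. Dually, resolving projectively and using that $q_d$ preserves projectives, one obtains $E_2^{s,t}={\rm Ext}^s_{\F_d(\A;\mathbb{Z})}(L_tq_d\,i_dF,G)\Rightarrow{\rm Ext}^{s+t}_{\F(\A;\mathbb{Z})}(F,G)$. Thus it suffices to prove, for the additive argument, that $R^tp_d(i_dG)=0$ (case $G$ additif) or $L_tq_d(i_dF)=0$ (case $F$ additif, entirely analogous) for $1\leq t\leq 2d$.

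The heart of the matter is therefore the computation of the derived polynomial truncation of an additive functor, which I would carry out on the standard projective generators $P_a=\mathbb{Z}[\A(a,-)]$ — the integral group ring of the abelian group $\A(a,x)$ — and their injective analogues. These satisfy the exponential identity $P_a(x\oplus y)\cong P_a(x)\otimes P_a(y)$, and the powers of the augmentation ideal provide a natural filtration detecting the polynomial degree: its linearisation (weight-one quotient) is the representable additive functor $\overline{P_a}\cong\A(a,-)$, while its weight-$n$ subquotient is the homogeneous degree-$n$ functor built from the $n$-th symmetric/divided power of $\overline{P_a}$. Running $R^tp_d$ (resp. $L_tq_d$) against this filtration expresses it through the homological interaction of the weight-$(>d)$ pieces with the truncation, that is, through ${\rm Ext}$-groups between the homogeneous graded pieces and the additive target.

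Both hypotheses are then exactly what is needed. Torsion-freeness of the $\A(a,b)$ makes $\overline{P_a}(b)\cong\A(a,b)$ torsion-free, so the symmetric, divided and exterior powers that occur stay flat; this annihilates the parasitic $\mathrm{Tor}$ and low-degree $\mathrm{Ext}$ contributions that the Frobenius phenomenon of Exemple~\ref{ex-cl} would otherwise create — indeed $p$-torsion in a $\mathrm{Hom}$-group is precisely what reintroduces such a class. Additivity of one argument collapses the ${\rm Ext}$ between graded pieces to a single manageable strand, and a connectivity estimate on it should bound from below the cohomological degree in which a weight-$n$ piece ($n>d$) can first contribute; the extremal contribution should come from the weight-$(d+1)$ piece and land in degree $2d+1$, which simultaneously yields the vanishing for $t\leq 2d$ and shows the bound is sharp. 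I expect this last step — the precise connectivity bookkeeping through the derived truncation, tracking the homogeneous degrees and using torsion-freeness to place the first non-vanishing at exactly $2d+1$ — to be the main obstacle; everything preceding it is formal.
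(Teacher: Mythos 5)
Your formal skeleton is sound and, as it happens, coincides with the machinery this paper sets up: the two Grothendieck spectral sequences you write down are (\ref{gr1}) and (\ref{gr2}), the reduction to the vanishing of $\mathbf{R}^tp_d$ on an additive $G$ (resp.\ $\mathbf{L}_tq_d$ on an additive $F$) for $1\leq t\leq 2d$ is exactly how Theorem~\ref{th1} is proved, and your computation of these derived functors through $q_d(\mathbb{Z}[\A(a,-)])$ and the augmentation filtration is formula (\ref{eqa1})/(\ref{eqa2}). Be aware, though, that the paper does not reprove Theorem~\ref{th-pira}: it quotes it from \cite{P-extor}, where the proof goes through Mac Lane's cubical construction and a quasi-regularity argument for the augmentation ideal; the only indication given here is the remark after the proof of Theorem~\ref{th1}, namely that rerunning that proof with the \emph{classical} Koszul exact sequence and its dual recovers the bound $2d$ in the torsion-free case. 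One incidental error: injectivity of the comparison map in all degrees is \emph{not} automatic from thickness of $\F_d(\A;\mathbb{Z})$ --- thickness gives an isomorphism for $i\leq 1$ and a monomorphism for $i=2$ only, which is all that Example~\ref{ex-cl} uses. (Your spectral-sequence argument would give injectivity and surjectivity together, so nothing is lost, but the claim as stated is false.)

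The genuine gap is the step you yourself flag as the main obstacle, and the mechanism you propose for it is not the right one. Flatness of the values of $S^n\circ\A(a,-)$ and $\Lambda^n\circ\A(a,-)$ cannot by itself ``annihilate the low-degree $\mathrm{Ext}$ contributions'': Example~\ref{ex-cl} lives over a field, where every value is flat, and the Frobenius class nonetheless survives in $\mathrm{Ext}^2$. Two precise ingredients are missing. First, Pirashvili's vanishing lemma (Lemma~\ref{lmpira}): $\mathrm{Ext}^*$ between an additive functor and a tensor product of two reduced functors vanishes in \emph{all} degrees; this is a statement about the functor category, not about flatness of values, and it is the only reason a weight-graded piece can become invisible to an additive argument. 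Second, the exactness, for torsion-free $V$, of the classical Koszul complex $0\to\Lambda^n\to\Lambda^{n-1}\otimes S^1\to\dots\to\Lambda^1\otimes S^{n-1}\to S^n\to 0$ and of its dual involving the divided powers $\Gamma^n$; this is where torsion-freeness of the groups $\A(a,b)$ actually enters (it is the functor-category avatar of the quasi-regularity used by Pirashvili, and it is exactly what fails in Proposition~\ref{kosz} when there is $p$-torsion). Combining the two via hypercohomology spectral sequences is what converts ``weight $n>d$'' into ``cohomological degree $>2d$'': since the intermediate terms $\Lambda^j\otimes S^{n-j}$ and $\Gamma^j\otimes\Lambda^{n-j}$ have no $\mathrm{Ext}$ against an additive functor, the two exact complexes shift $\mathrm{Ext}^i(S^n\circ\A(a,-),G)$ down to an $\mathrm{Ext}$-group of $\Gamma^n\circ\A(a,-)$ in degree $i-2(n-1)$, which is negative in the range needed once $n\geq d+1$. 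Without these two inputs your connectivity bookkeeping cannot be carried out, so the proposal correctly reduces the theorem but does not prove it.
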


(Il n'est pas difficile de reprendre les arguments de Pirashvili pour inclure aussi le cas $d=\infty$ dans l'énoncé.)

La démonstration de ce théorème donnée dans \cite{P-extor} repose sur un examen minutieux de la construction cubique de Mac Lane et de la filtration polynomiale (i.e. par les puissances de l'idéal d'augmentation) de l'anneau de groupe $\mathbb{Z}[V]$, où $V$ est un groupe abélien sans torsion (un point fondamental est un argument d'idéal {\em quasi-régulier} --- notion introduite par Quillen dans ses travaux sur l'homologie des algèbres --- qui repose fortement sur l'absence de torsion).

\begin{rem}
 Pirashvili donne également un énoncé analogue pour les groupes de torsion (qui se démontre exactement de la même façon). On laissera au lecteur le soin d'écrire les traductions en termes de Tor des résultats du présent article.
\end{rem}

\medskip

Donnons maintenant nos résultats. On commence par se concentrer sur le cas où la catégorie but est la catégorie des espaces vectoriels sur le corps $\mathbb{F}_p=\mathbb{Z}/p$, où $p$ est un nombre premier fixé.

\begin{thm}\label{th1}
 Supposons qu'il existe un entier naturel $r$ tel que, pour tous objets $a$ et $b$ de $\A$, la torsion $p$-primaire du groupe abélien $\A(a,b)$ soit bornée par $p^r$.

Soient $d,n,i\in\mathbb{N}$ tels que $d\leq n$, $F$ un foncteur de $\F_d(\A;\mathbb{F}_p)$, $G$ un foncteur de $\F_n(\A;\mathbb{F}_p)$. Alors le morphisme canonique
$${\rm Ext}^i_{\F_n(\A;\mathbb{F}_p)}(F,G)\to {\rm Ext}^i_{\F(\A;\mathbb{F}_p)}(F,G)$$
est un isomorphisme pour $i\leq \big[\frac{n-d+1}{p^r}\big]$ et un monomorphisme pour $i=\big[\frac{n-d+1}{p^r}\big]+1$.

Le même résultat vaut si l'on échange $F$ et $G$ dans les groupes d'extensions.
\end{thm}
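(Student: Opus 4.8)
Le plan consiste à réaliser le morphisme de comparaison comme morphisme d'arête d'une suite spectrale issue de l'adjonction $(q_n,i_n)$, puis à ramener tout l'énoncé à une estimation de connectivité d'un complexe explicite, le facteur $1/p^r$ n'intervenant qu'à la dernière étape. Je traiterais d'abord le cas énoncé (où $F$ est de bas degré $d$) au moyen de l'adjoint à gauche $q_n$ de l'inclusion exacte $i_n$. Comme $q_n$ est adjoint à gauche d'un foncteur exact, il préserve les projectifs, et pour tout projectif $P$ de $\F(\A;\mathbb{F}_p)$ et tout $G$ de $\F_n(\A;\mathbb{F}_p)$ on a ${\rm Hom}_{\F_n(\A;\mathbb{F}_p)}(q_n P,G)\cong{\rm Hom}_{\F(\A;\mathbb{F}_p)}(P,G)$. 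En choisissant une résolution projective $P_\bullet\to F$ dans $\F(\A;\mathbb{F}_p)$ (par sommes de projectifs standard $P_a=\mathbb{F}_p[\A(a,-)]$), le complexe $q_n P_\bullet$ est un complexe de projectifs de $\F_n(\A;\mathbb{F}_p)$ d'homologie $L_\bullet q_n(F)$, et ${\rm Hom}_{\F_n(\A;\mathbb{F}_p)}(q_n P_\bullet,G)\cong{\rm Hom}_{\F(\A;\mathbb{F}_p)}(P_\bullet,G)$. La suite spectrale d'hyper-Ext correspondante s'écrit $E_2^{s,t}={\rm Ext}^s_{\F_n(\A;\mathbb{F}_p)}(L_t q_n(F),G)\Rightarrow{\rm Ext}^{s+t}_{\F(\A;\mathbb{F}_p)}(F,G)$ ; sa ligne $t=0$ est ${\rm Ext}^s_{\F_n(\A;\mathbb{F}_p)}(F,G)$ (car $q_n F=F$) et son morphisme d'arête est exactement le morphisme de comparaison. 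Une inspection de routine montre que si $L_t q_n(F)=0$ pour $1\le t\le N$, alors ce morphisme est un isomorphisme pour $i\le N$ et un monomorphisme pour $i=N+1$. Il suffit donc de montrer l'annulation de $L_t q_n(F)$ pour $1\le t\le N:=\big[\frac{n-d+1}{p^r}\big]$.

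J'identifierais ensuite ces foncteurs dérivés. Le degré polynomial de $\mathbb{F}_p[\A(a,-)]$ coïncide avec sa filtration par les puissances de l'idéal d'augmentation, de sorte que $q_n P_a=P_a/I^{n+1}$ ($I$ désignant l'idéal d'augmentation) ; ainsi $q_n P_\bullet=P_\bullet/I^{n+1}_\bullet$, où $I^{n+1}_\bullet$ est le sous-complexe des $(n+1)$-èmes puissances d'augmentation. La suite exacte courte de complexes $0\to I^{n+1}_\bullet\to P_\bullet\to q_n P_\bullet\to 0$, jointe à l'acyclicité de $P_\bullet$ en degrés strictement positifs, fournit $L_t q_n(F)\cong H_{t-1}(I^{n+1}_\bullet)$ pour $t\ge 2$, les cas $t=0,1$ se contrôlant directement via le bas de la suite exacte longue associée. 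Le problème se ramène alors à un énoncé de connectivité : le complexe des puissances d'augmentation $I^{n+1}_\bullet$ attaché à une résolution projective d'un foncteur de degré $d$ vérifie $H_j(I^{n+1}_\bullet)=0$ pour $j\le N-1$.

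Cette estimation de connectivité est le cœur de la démonstration, et l'endroit où intervient l'hypothèse de torsion. Je filtrerais la résolution par le degré d'augmentation et me ramènerais, par naturalité et additivité en $\A$, à l'étude de $\mathbb{F}_p[M]$ pour $M=\A(a,b)$ un groupe abélien dont la torsion $p$-primaire est annulée par $p^r$. L'écriture de $M$ comme extension d'un groupe sans torsion par un tel groupe de torsion donne $\mathbb{F}_p[M]\cong\mathbb{F}_p[M_{\rm libre}]\otimes\mathbb{F}_p[M_{\rm tors}]$, et la structure explicite $\mathbb{F}_p[\mathbb{Z}/p^s]\cong\mathbb{F}_p[y]/(y^{p^s})$ avec $s\le r$ montre que l'idéal d'augmentation est quasi-régulier à un décalage près borné par $p^r$ --- version quantitative de l'argument de quasi-régularité de Quillen utilisé par Pirashvili dans le cas sans torsion (théorème~\ref{th-pira}), où le décalage vaut $1$. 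Concrètement, chaque passage au degré homologique suivant dans le complexe de type Koszul gradué par l'augmentation élève le degré d'augmentation d'au plus $p^r$, de sorte que les composantes de $I^{n+1}_\bullet$ de degré d'augmentation $\ge n+1$ ne peuvent contribuer à $H_j$ avant $j\ge\frac{(n+1)-d}{p^r}$ ; c'est exactement la ligne d'annulation $H_j(I^{n+1}_\bullet)=0$ pour $j\le N-1$. Le principal obstacle est précisément d'établir cette quasi-régularité quantitative avec la pente optimale $1/p^r$ : les réductions formelles des deux premières étapes sont routinières, tandis que la détermination de l'exposant $p^r$ exige une analyse soignée de l'interaction entre le Frobenius et la torsion bornée dans la filtration d'augmentation.

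Enfin, l'énoncé symétrique (où l'on échange $F$ et $G$, $G$ étant désormais de degré $d$) s'obtient par l'argument dual, en remplaçant l'adjoint à gauche $q_n$ et ses dérivés à gauche $L_t q_n$ par l'adjoint à droite $p_n$ et ses dérivés à droite $R^t p_n$, une résolution injective de $G$ dans $\F(\A;\mathbb{F}_p)$, et le complexe d'augmentation dual (par quotients) ; la même quasi-régularité quantitative fournit la même borne.
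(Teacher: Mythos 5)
Votre stratégie générale est bien celle de l'article : réduction, par la suite spectrale de Grothendieck associée à l'adjonction $(q_n,i_n)$ (l'article traite explicitement le cas dual via $(i_n,p_n)$ et la formule~(\ref{eqa1}), le vôtre étant <<~entièrement analogue~>>), à l'annulation de $\mathbf{L}_j q_n(F)$ pour $0<j\leq\frac{n-d+1}{p^r}$, puis identification de la source de la pente $1/p^r$ dans la structure $\mathbb{F}_p[\mathbb{Z}/p^s]\simeq\mathbb{F}_p[y]/(y^{p^s})$ --- c'est exactement le contenu de la proposition~\ref{kosz}. La partie formelle (morphisme d'arête, identification $q_nP_a=P_a/I^{n+1}$, dualisation pour l'énoncé symétrique) est correcte.

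En revanche, l'étape centrale --- que vous signalez vous-même comme le principal obstacle --- présente un trou réel et pas seulement technique. L'argument <<~chaque passage au degré homologique suivant élève le degré d'augmentation d'au plus $p^r$, donc les composantes de degré d'augmentation $\geq n+1$ ne contribuent pas à $H_j$ avant $j\geq\frac{n+1-d}{p^r}$~>> ne contrôle que le gradué associé de la filtration d'augmentation (les foncteurs $S^m_{(p)}$), et non le sous-complexe $I^{n+1}_\bullet$ lui-même, car cette filtration n'est pas séparée en général. Par exemple, pour $V=\mathbb{Z}[1/p]$ (sans torsion, donc admissible avec $r=0$), on a $[v]-[0]=([v/p]-[0])^p$ dans $\mathbb{F}_p[V]$, d'où $I=I^2$ : tous les $S^m_{(p)}(V)$, $m\geq 1$, sont nuls alors que $I^{n+1}(V)=I(V)\neq 0$ pour tout $n$. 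L'homologie de $I^{n+1}_\bullet$ n'est donc pas déterminée par la quasi-régularité (même quantitative) du gradué. C'est précisément pourquoi la démonstration de l'article comporte une seconde moitié substantielle : après la comparaison <<~stable~>> $\mathbf{R}^jp_n(F)\simeq\mathbf{R}^jp_m(F)$ (obtenue par les complexes de Koszul tronqués, les suites spectrales d'hypercohomologie et le lemme~\ref{pira-gl}, généralisation du lemme d'annulation de Pirashvili indispensable dès que $d>1$ et absente de votre esquisse), il faut encore montrer que la <<~limite~>> de la filtration polynomiale ne contribue pas. Cela repose sur l'égalité $Pol_d(V/p^i,\mathbb{F}_p)=Pol_d(V,\mathbb{F}_p)$ pour $p^i>d$, la réduction aux groupes de type fini par commutation aux colimites filtrantes, et les lemmes de noethérianité et de stationnarité~\ref{lmf} et~\ref{lmf2}, le tout exprimé au moyen des foncteurs de stabilisation $\Hh^{(n)}_*$ et $\Hh_{(n)}^*$. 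Sans cet ingrédient, la réduction à la quasi-régularité quantitative ne démontre pas le théorème.
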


(Dans cet énoncé, les crochets désignent la partie entière.)

\begin{thm}\label{th2}
 Les assertions suivantes sont équivalentes.
\begin{enumerate}
 \item Pour tout objet $a$ de $\A$, il existe $r\in\mathbb{N}$ tel que, pour tout objet $b$ de $\A$, la torsion $p$-primaire du groupe abélien $\A(a,b)$ soit bornée par $p^r$ ;
\item pour tous objets $F$ et $G$ de $\F_\infty(\A;\mathbb{F}_p)$, le morphisme canonique
$${\rm Ext}^*_{\F_\infty(\A;\mathbb{F}_p)}(F,G)\to {\rm Ext}^*_{\F(\A;\mathbb{F}_p)}(F,G)$$
est un isomorphisme ;
\item pour tous objets $F$ et $G$ de $\F_1(\A;\mathbb{F}_p)$, le morphisme canonique
$${\rm Ext}^2_{\F_\infty(\A;\mathbb{F}_p)}(F,G)\to {\rm Ext}^2_{\F(\A;\mathbb{F}_p)}(F,G)$$
est un isomorphisme.
\end{enumerate}
\end{thm}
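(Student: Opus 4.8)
Le plan est de démontrer l'équivalence par le cycle d'implications $(1)\Rightarrow(2)\Rightarrow(3)\Rightarrow(1)$. L'implication $(2)\Rightarrow(3)$ est immédiate : l'assertion (3) est le cas particulier de (2) obtenu en se restreignant aux foncteurs polynomiaux de degré au plus $1$ (qui sont analytiques) et au degré cohomologique $2$.

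Pour $(1)\Rightarrow(2)$, l'outil central est le théorème~\ref{th1}, mais deux difficultés doivent être surmontées. D'une part, le théorème~\ref{th1} porte sur des foncteurs polynomiaux de degré fini, alors que (2) concerne des foncteurs analytiques : je commencerais donc par écrire tout foncteur analytique comme colimite filtrante de ses sous-foncteurs polynomiaux $p_n(-)$, puis je me ramènerais, après réduction à des foncteurs de type fini, au cas polynomial en faisant commuter les deux membres aux colimites filtrantes en jeu (les mêmes termes $\lim$ et $\lim^1$ apparaissant de part et d'autre, il suffit de traiter chaque étage, et d'appliquer le théorème~\ref{th1} à chaque couple de foncteurs de degré fini). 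D'autre part, le théorème~\ref{th1} suppose une borne sur la torsion $p$-primaire uniforme en les deux objets $a$ et $b$, tandis que l'hypothèse (1) ne fournit, pour chaque $a$, qu'une borne $p^{r_a}$ uniforme en $b$ seulement. Pour y remédier, je résoudrais $F$ par des sommes de projectifs représentables $P_a=\mathbb{F}_p[\A(a,-)]$, de sorte que le calcul ne fasse intervenir, pour chaque générateur, que la torsion de $\A(a,-)$, contrôlée par $p^{r_a}$. Comme la borne de comparaison du théorème~\ref{th1} croît avec $n$ et que l'on fait tendre $n$ vers l'infini dans le cas analytique, chaque borne finie $r_a$ suffit à donner l'isomorphisme en tout degré cohomologique ; le point technique sera de justifier que, pour un degré cohomologique fixé, seul un nombre fini d'objets $a$ intervient, ce qui permet de remplacer la borne ponctuelle par une borne uniforme sur cette famille finie.

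L'implication $(3)\Rightarrow(1)$, qui fait de l'énoncé une véritable caractérisation, me paraît la partie la plus délicate et constituera l'obstacle principal. Je procéderais par contraposée : si (1) est en défaut, il existe un objet $a$ tel que la torsion $p$-primaire de $\A(a,b)$ ne soit pas bornée lorsque $b$ parcourt les objets de $\A$ ; on dispose ainsi d'éléments de torsion d'ordre $p^s$ arbitrairement grand. Il s'agit de fabriquer deux foncteurs additifs $F$ et $G$, donc objets de $\F_1(\A;\mathbb{F}_p)$, et une classe de $\mathrm{Ext}^2_{\F(\A;\mathbb{F}_p)}(F,G)$ qui ne provienne pas de $\mathrm{Ext}^2_{\F_\infty(\A;\mathbb{F}_p)}(F,G)$ ; rappelons que, $\F_\infty$ étant une sous-catégorie épaisse, ce morphisme est toujours un monomorphisme en degré $2$, de sorte que seule la surjectivité peut être mise en défaut. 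L'idée directrice est de reproduire le mécanisme de Frobenius de l'exemple~\ref{ex-cl} : un élément de torsion d'ordre $p^s$ donne naissance à une extension de type Frobenius dont les termes médians sont de degré polynomial de l'ordre de $p^s$. La torsion n'étant pas bornée, ces degrés tendent vers l'infini, et la classe globale que l'on construit exige un terme médian non analytique, essentiellement lié au défaut d'analyticité du projectif $P_a$ lorsque la torsion de $\A(a,-)$ est non bornée. Le c\oe{}ur de la démonstration sera de vérifier que cette classe, parfaitement définie dans la catégorie de tous les foncteurs, n'admet aucun représentant à termes analytiques et échappe donc à l'image du morphisme de comparaison.
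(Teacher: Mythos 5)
Votre plan (cycle $(1)\Rightarrow(2)\Rightarrow(3)\Rightarrow(1)$, avec $(2)\Rightarrow(3)$ immédiate) coïncide avec celui de l'article, mais les deux implications non triviales présentent chacune une lacune réelle. Pour $(1)\Rightarrow(2)$, la réduction que vous proposez --- faire commuter les deux membres aux colimites filtrantes, quitte à contrôler des termes $\lim^1$ --- est précisément le point qui tombe en défaut : l'article souligne (remarque~\ref{rcfm} et remarque~\ref{rqf}) que $\mathbf{R}^i p_d$ et ${\rm Ext}^{i}_{\F(\A;\mathbb{F}_p)}(T,-)$ ne commutent pas en général aux colimites filtrantes pour $i\geq 2$, et que $\F_\infty(\A;\mathbb{F}_p)$ n'est pas toujours localement noethérienne ; on ne peut donc pas passer du cas polynomial au cas analytique par un argument formel de colimite, et le morphisme $\underset{r}{\col}\,{\rm Ext}^*_{\F_r}\to{\rm Ext}^*_{\F_\infty}$ n'est pas un isomorphisme en général. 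Votre affirmation selon laquelle, à degré cohomologique fixé, seul un nombre fini d'objets $a$ intervient dans une résolution projective de $F$ n'est pas justifiée (et est fausse en général). L'article contourne la difficulté du côté injectif : on corésout $G$ par des $p_\infty\big(\prod_t Fct(\A(-,a_t),\mathbb{F}_p)\big)$ et on montre directement que l'inclusion $G\hookrightarrow I$ induit un isomorphisme sur $\Hh^*_{(n)}$, en utilisant la stabilité par colimites filtrantes de la proposition~\ref{pcol} et le caractère \emph{fortement} stationnaire des duaux de foncteurs polynomiaux noethériens (lemmes~\ref{lmf} et~\ref{lmf2}) ; c'est exactement l'ordre des quantificateurs de la condition~1 ($\forall a\,\exists r\,\forall b$) qui rend uniforme, en chaque point d'évaluation, la torsion de la famille infinie $(\A(-,a_t))_t$ et permet d'échanger la colimite sur $i$ avec le produit infini.

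Pour $(3)\Rightarrow(1)$, votre contraposée identifie la bonne heuristique (le mécanisme de Frobenius et les degrés $\sim p^s$ non bornés), mais la phrase <<~le c\oe ur de la démonstration sera de vérifier que cette classe n'admet aucun représentant à termes analytiques~>> ne fait qu'énoncer ce qu'il faut prouver, sans donner d'obstruction effective : rien dans votre esquisse n'empêche a priori qu'une suite exacte à terme médian non analytique soit équivalente, dans ${\rm Ext}^2$, à une suite à termes analytiques. L'article remplace cet argument par un calcul exact : la proposition~\ref{pr-ext2} identifie ${\rm Ext}^2_{\F(\A;\mathbb{F}_p)}(A,B)$ à ${\rm Hom}_{\F(\A;\mathbb{F}_p)}\big(A,{\rm Ext}^1_\mathbb{Z}(-,\mathbb{Z}/p)\circ T\big)$ et ${\rm Ext}^2_{\F_\infty(\A;\mathbb{F}_p)}(A,B)$ à sa version <<~stationnarisée~>> ${\rm Hom}\big(A,\underset{i}{\col}\,{\rm Ext}^1_\mathbb{Z}(-,\mathbb{Z}/p)\circ(T/p^i)\big)$, puis le lemme~\ref{lmelc} traduit la surjectivité du morphisme de comparaison en la bornitude de la torsion $p$-primaire. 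Sans un tel calcul (ou un substitut précis), votre implication $(3)\Rightarrow(1)$ reste un programme plutôt qu'une démonstration.
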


\begin{pr}\label{pr3}
 Soient $d\in\mathbb{N}$, $F$ et $G$ des foncteurs de $\F_d(\A;\mathbb{F}_p)$. Le morphisme canonique
$$\underset{r\geq d}{\col} {\rm Ext}^*_{\F_r(\A;\mathbb{F}_p)}(F,G)\to \underset{p^t>d}{\col}{\rm Ext}^*_{\F(\A/p^t;\mathbb{F}_p)}(F,G)$$
est un isomorphisme.
\end{pr}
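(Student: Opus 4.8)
The plan is to compare both colimits to a single double colimit indexed by the pairs $(n,t)$ with $n\geq d$ and $p^t>n$, identifying the left-hand side through a structural equivalence and the right-hand side through the quantitative comparison already stated in Théorème~\ref{th1}; the canonical map of the statement then factors as the resulting chain of isomorphisms.

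First I would isolate the key lemma: for $p^t>r$, restriction $\pi_t^*$ along the (additive, full, bijective-on-objects) projection $\pi_t\colon\A\to\A/p^t$ restricts to an exact equivalence $\F_r(\A/p^t;\mathbb{F}_p)\xrightarrow{\sim}\F_r(\A;\mathbb{F}_p)$. Exactness and full faithfulness are formal (precomposition along a full functor surjective on objects), and that $\pi_t^*$ preserves and reflects the polynomial degree follows from the compatibility of cross-effects with additive precomposition. The one substantial point is essential surjectivity, i.e. that every functor $F$ of degree $\leq r$ factors through $\A/p^t$ when $p^t>r$. For this I would fix parallel morphisms $f,h$ and examine $\psi(m)=F(f+mh)$, a polynomial map of degree $\leq r$ valued in the $\mathbb{F}_p$-vector space $\mathrm{Hom}_{\mathbb{F}_p}(F(a),F(b))$; writing $\psi(m)=\sum_{k=0}^r\binom{m}{k}(\Delta^k\psi)(0)$ with $\Delta$ the finite difference operator, and using that $\binom{m+p^t}{k}\equiv\binom{m}{k}\ (\mathrm{mod}\ p)$ for $k\leq r<p^t$, one gets $F(f+p^t h)=\psi(p^t)=\psi(0)=F(f)$, so $F$ descends. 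Denote by $\overline F,\overline G$ the resulting objects of $\F_r(\A/p^t;\mathbb{F}_p)$.

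Next I would set $D:=\col_{(n,t)}\mathrm{Ext}^*_{\F_n(\A/p^t;\mathbb{F}_p)}(\overline F,\overline G)$ over the poset $P=\{(n,t)\mid d\leq n,\ p^t>n\}$, ordered componentwise; the transition maps combine the inclusions $\F_n\subset\F_{n'}$ with the restriction functors induced by the projections $\A/p^{t'}\to\A/p^t$, which are exact and degree-preserving, so $D$ is well defined. To identify $D$ with the left-hand side I would use the lemma: for fixed $n$ the system $t\mapsto\mathrm{Ext}^*_{\F_n(\A/p^t;\mathbb{F}_p)}(\overline F,\overline G)$ has all transition maps isomorphisms, each term being carried compatibly by $\pi_t^*$ onto $\mathrm{Ext}^*_{\F_n(\A;\mathbb{F}_p)}(F,G)$; a Fubini argument for colimits then gives $D\cong\col_{n\geq d}\mathrm{Ext}^*_{\F_n(\A;\mathbb{F}_p)}(F,G)$.

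To identify $D$ with the right-hand side I would apply Théorème~\ref{th1} to $\A/p^t$, whose morphism groups are $\mathbb{Z}/p^t$-modules and hence have $p$-primary torsion uniformly bounded by $p^t$: it yields that $\mathrm{Ext}^i_{\F_n(\A/p^t;\mathbb{F}_p)}(\overline F,\overline G)\to\mathrm{Ext}^i_{\F(\A/p^t;\mathbb{F}_p)}(\overline F,\overline G)$ is an isomorphism once $i\leq[(n-d+1)/p^t]$, so that $\mathrm{Ext}^*_{\F(\A/p^t;\mathbb{F}_p)}(\overline F,\overline G)=\col_{n}\mathrm{Ext}^*_{\F_n(\A/p^t;\mathbb{F}_p)}(\overline F,\overline G)$ for each fixed $t$. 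The canonical map $D\to\col_t\mathrm{Ext}^*_{\F(\A/p^t;\mathbb{F}_p)}(\overline F,\overline G)$ is then shown bijective by a cofinality argument: a class over some $t_0$ is represented at a finite degree $n_0$ by th1, and one raises $t$ to $t_1$ with $p^{t_1}>n_0$ so that the representative, restricted along $\A/p^{t_1}\to\A/p^{t_0}$, lies in $P$; injectivity is obtained the same way, raising $t$ once more to land the vanishing locus inside $P$. This is where the interplay of the two indices is delicate: for fixed $t$ the truncation $n<p^t$ only reaches cohomological degree $\approx 1$, and it is precisely the simultaneous passage to the limit in $t$ that restores surjectivity and injectivity in every degree. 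I expect this cofinality juggling, together with checking that every map in sight commutes with the exact degree-preserving restriction functors, to be the main obstacle; the structural equivalence of the first step is clean once the binomial periodicity is in place.
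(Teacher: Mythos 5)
Your proposal is correct and follows essentially the same route as the paper: apply Théorème~\ref{th1} to $\A/p^t$ (whose hom-groups have $p$-primary torsion bounded by $p^t$) to identify ${\rm Ext}^*_{\F(\A/p^t;\mathbb{F}_p)}(F,G)$ with $\underset{r\geq d}{\col}\,{\rm Ext}^*_{\F_r(\A/p^t;\mathbb{F}_p)}(F,G)$, then combine this with the equivalence $\F_r(\A/p^t;\mathbb{F}_p)\simeq\F_r(\A;\mathbb{F}_p)$ for $p^t>r$. The only difference is that you re-prove that equivalence (which the paper had already established in la section~\ref{so} via l'égalité $Pol_r(V/p^t,\mathbb{F}_p)=Pol_r(V,\mathbb{F}_p)$ pour $p^t>r$) and make explicit the double-colimit cofinality bookkeeping that the paper leaves implicit.
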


Cet énoncé mérite quelques précisions : pour $n\in\mathbb{N}$, on note $\A/n$ la catégorie additive ayant les mêmes objets que $\A$ et dont les morphismes sont donnés par $(\A/n)(a,b)=\A(a,b)/n$, la composition étant induite par celle de $\A$, de sorte qu'on dispose d'un foncteur canonique additif, plein et essentiellement surjectif $\A\to\A/n$. On dispose donc d'un diagramme commutatif de foncteurs d'inclusion
$$\xymatrix{\F_d(\A/p^t;\mathbb{F}_p)\ar[r]\ar[d] & \F(\A/p^t;\mathbb{F}_p)\ar[d] \\
\F_d(\A;\mathbb{F}_p)\ar[r] & \F(\A;\mathbb{F}_p)
}$$
dans lequel on voit sans trop de peine (nous donnerons les détails de l'argument dans la section~\ref{so}) que la flèche verticale de gauche est une égalité si $p^t>d$, ce qui procure un foncteur (exact et pleinement fidèle) $\F_d(\A;\mathbb{F}_p)\to\F(\A/p^t;\mathbb{F}_p)$ sous cette condition. Le morphisme canonique de la proposition est induit par ces foncteurs.

\smallskip

Notre dernier résultat sur $\F(\A;\mathbb{F}_p)$ constitue le c\oe ur de la démonstration de l'implication {\em 3.}$\Rightarrow${\em 1.} du théorème~\ref{th1}.

\begin{pr}\label{pr-ext2}
 Soient $A$ un foncteur additif de $\F(\A;\mathbb{F}_p)$ et $T : \A^{op}\to\mathbf{Ab}$ un foncteur additif projectif de la forme $\underset{i}{\bigoplus}\A(-,a_i)$, où $(a_i)$ est une collection d'objets de $\A$. On note $B:={\rm Hom}_\mathbb{Z}(T,\mathbb{F}_p)$, c'est donc un foncteur additif de $\F(\A;\mathbb{F}_p)$.

On dispose d'un diagramme commutatif
$$\xymatrix{\underset{d\in\mathbb{N}^*}{\col} {\rm Ext}^2_{\F_d(\A;\mathbb{F}_p)}(A,B)\ar[d]\ar[r]^-\simeq & \underset{i\in\mathbb{N}}{\col} {\rm Hom}_{\F(\A;\mathbb{F}_p)}\big(A,{\rm Ext}^1_\mathbb{Z}(-,\mathbb{Z}/p)\circ (T/p^i)\big)\ar[d] \\
{\rm Ext}^2_{\F_\infty(\A;\mathbb{F}_p)}(A,B)\ar[d]\ar[r]^-\simeq & {\rm Hom}_{\F(\A;\mathbb{F}_p)}\big(A,\underset{i\in\mathbb{N}}{\col}{\rm Ext}^1_\mathbb{Z}(-,\mathbb{Z}/p)\circ (T/p^i)\big)\ar[d] \\
{\rm Ext}^2_{\F(\A;\mathbb{F}_p)}(A,B)\ar[r]^-\simeq & {\rm Hom}_{\F(\A;\mathbb{F}_p)}\big(A,{\rm Ext}^1_\mathbb{Z}(-,\mathbb{Z}/p)\circ T\big)
}$$
dont les flèches verticales sont induites par les foncteurs d'inclusion.
\end{pr}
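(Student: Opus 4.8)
L'idée est d'établir les trois isomorphismes horizontaux par un même calcul, puis de lire la commutativité des carrés sur sa naturalité. Posons $E:={\rm Ext}^1_\mathbb{Z}(-,\mathbb{Z}/p)\circ T$ et $E_i:={\rm Ext}^1_\mathbb{Z}(-,\mathbb{Z}/p)\circ (T/p^i)$, les foncteurs apparaissant au membre de droite ; concrètement $E(a)={\rm Ext}^1_\mathbb{Z}(T(a),\mathbb{F}_p)$. La suite exacte courte $0\to\mathbb{Z}\xrightarrow{p}\mathbb{Z}\to\mathbb{F}_p\to 0$, à laquelle on applique ${\rm Hom}_\mathbb{Z}(T(a),-)$, exhibe $E$ comme premier foncteur dérivé de ${\rm Hom}_\mathbb{Z}(T,-)$ évalué en $\mathbb{F}_p$ et le relie à $B={\rm Hom}_\mathbb{Z}(T,\mathbb{F}_p)$ par une suite exacte ; l'énoncé analogue vaut pour chaque $E_i$, avec $T/p^i$ à la place de $T$. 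L'ingrédient en degré zéro est l'adjonction de co-Yoneda : pour tout foncteur additif $S\colon\A^{op}\to\mathbf{Ab}$ on a ${\rm Hom}_{\F(\A;\mathbb{F}_p)}\big(A,{\rm Hom}_\mathbb{Z}(S,\mathbb{F}_p)\big)\cong{\rm Hom}_{\mathbb{F}_p}(A\otimes_\A S,\mathbb{F}_p)$, ce qui, $T$ étant la somme des représentables $\A(-,a_i)$, ramène les accouplements contre $T$ et ses troncations aux valeurs $A(a_i)$.

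Le c\oe ur de la démonstration est l'isomorphisme du bas, calculé dans la catégorie de tous les foncteurs $\F(\A;\mathbb{F}_p)$. On ne peut pas se contenter de raisonner dans $\fct(\A,\mathbf{Ab})$ via le dual à valeurs dans $\mathbb{Q}/\mathbb{Z}$ du projectif $T$ : cet objet, injectif dans $\fct(\A,\mathbf{Ab})$, cesse de l'être dès qu'on tient compte de toute la structure polynomiale sur $\mathbb{F}_p$, l'écart étant précisément le phénomène de Frobenius de l'exemple~\ref{ex-cl}. Je calculerais donc ${\rm Ext}^2_{\F(\A;\mathbb{F}_p)}(A,B)$ en combinant la suite exacte longue associée à la suite de coefficients ci-dessus avec la machinerie des effets croisés et de la filtration polynomiale, de façon à envoyer tout ${\rm Ext}^2$ dans le premier groupe d'obstruction, qui par l'identification de $E$ vaut ${\rm Hom}_{\F(\A;\mathbb{F}_p)}(A,E)$. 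Le saut du degré cohomologique $1$ au degré $2$ est le Bockstein (ou Frobenius), exactement comme la classe de $0\to {\rm I}\to S^p\to\Gamma^p\to {\rm I}\to 0$ vit dans ${\rm Ext}^2$ et non dans ${\rm Ext}^1$ ; c'est ici que servent l'additivité de $A$ et le fait que $B$ soit le $\mathbb{F}_p$-dual d'un projectif $T$, et c'est le principal obstacle.

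Pour la ligne du haut, j'invoquerais la proposition~\ref{pr3} afin de remplacer $\underset{d}{\col}\,{\rm Ext}^2_{\F_d(\A;\mathbb{F}_p)}(A,B)$ par $\underset{p^t>d}{\col}\,{\rm Ext}^2_{\F(\A/p^t;\mathbb{F}_p)}(A,B)$, puis je referais le calcul du bas sur chaque catégorie quotient $\A/p^t$. Sur $\A/p^t$, les projectifs représentables sont les réductions $\A(-,a_i)/p^t$, de sorte que l'objet jouant le rôle de $T$ est sa troncation $T/p^t$ et que l'isomorphisme fondamental devient ${\rm Ext}^2_{\F(\A/p^t;\mathbb{F}_p)}(A,B)\cong{\rm Hom}_{\F(\A;\mathbb{F}_p)}(A,E_t)$ ; le passage à la colimite fournit le terme en haut à droite $\underset{i}{\col}\,{\rm Hom}(A,E_i)$. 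C'est exactement ce qui apparie le degré polynomial $d$ à l'exposant de torsion $p^t$. Pour la ligne du milieu, la catégorie analytique $\F_\infty(\A;\mathbb{F}_p)$ contient déjà la colimite filtrante $\underset{i}{\col}\,E_i$ comme un unique objet (analytique) ; le même calcul mené dans $\F_\infty$ place donc la colimite à l'intérieur du ${\rm Hom}$ et donne ${\rm Hom}\big(A,\underset{i}{\col}\,E_i\big)$.

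Enfin, les deux carrés commutent par naturalité : à gauche, les flèches verticales proviennent des inclusions épaisses $\F_d(\A;\mathbb{F}_p)\hookrightarrow\F_\infty(\A;\mathbb{F}_p)\hookrightarrow\F(\A;\mathbb{F}_p)$ ; à droite, les surjections $T\twoheadrightarrow T/p^i$ induisent $E_i\to E$, d'où $\underset{i}{\col}\,E_i\to E$, ainsi que la comparaison canonique $\underset{i}{\col}\,{\rm Hom}(A,E_i)\to{\rm Hom}(A,\underset{i}{\col}\,E_i)$. Vérifier que ces flèches correspondent aux flèches induites par les inclusions sous les trois isomorphismes horizontaux est une chasse au diagramme reposant sur la fonctorialité du calcul fondamental en l'objet de coefficients et sur celle de la proposition~\ref{pr3} en $t$. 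Le point réellement délicat est l'isomorphisme du bas du paragraphe précédent --- le calcul honnête de ${\rm Ext}^2$ dans la catégorie de tous les foncteurs, qui n'a rien de formel et doit extraire la contribution de Frobenius --- tandis que la réduction à $\A/p^t$, la variante analytique et la commutativité sont comparativement routinières une fois ce calcul acquis.
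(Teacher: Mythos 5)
Your overall architecture matches the paper's: the bottom isomorphism is the essential computation, the top row follows from Proposition~\ref{pr3} by running the same computation over the quotient categories $\A/p^t$, the middle row is the analytic variant obtained by replacing the relevant injective by its largest analytic subobject, and the commutativity of the squares is naturality. But the one step you yourself flag as \emph{le principal obstacle} --- the bottom isomorphism ${\rm Ext}^2_{\F(\A;\mathbb{F}_p)}(A,B)\simeq{\rm Hom}_{\F(\A;\mathbb{F}_p)}\big(A,{\rm Ext}^1_\mathbb{Z}(-,\mathbb{Z}/p)\circ T\big)$ --- is not actually proved. Saying that one combines the long exact coefficient sequence with \og la machinerie des effets croisés et de la filtration polynomiale \fg{} so as to send every ${\rm Ext}^2$ into \og le premier groupe d'obstruction \fg{} names neither that obstruction group nor the map realizing the isomorphism, and the Bockstein sequence $0\to\mathbb{Z}\xrightarrow{p}\mathbb{Z}\to\mathbb{F}_p\to 0$ you start from is not the mechanism that makes the argument work.

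What the paper actually does here is the following. Let $I=\prod_i\overline{Fct}(\A(-,a_i),\mathbb{F}_p)$, an injective of $\F(\A;\mathbb{F}_p)$ with $p_1(I)\simeq B$. One forms the two hypercohomology spectral sequences converging to $\mathbf{H}{\rm Ext}^*_{\F(\A;\mathbb{F}_p)}(A,D^*_{(1)}(I))$, where $D^*_{(1)}$ is the dual Dold--Puppe stabilization of the paragraphe~\ref{pdp}. By adjunction this double complex also computes ${\rm Hom}_{\F(\A;\mathbb{F}_p)}(D_*^{(1)}(A),I)$, which is concentrated in degree $0$ because $A$ is additive; hence the common abutment vanishes in total degree $1$, forcing the differential $d_2^{0,1}:{\rm Hom}(A,\Hh^1_{(1)}(I))\to{\rm Ext}^2(A,\Hh^0_{(1)}(I))$ of the other spectral sequence to be an isomorphism. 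The identification $\Hh^1_{(1)}(I)\simeq{\rm Ext}^1_\mathbb{Z}(-,\mathbb{Z}/p)\circ T$ is then exactly Lemma~\ref{lm-eml2}, which rests on the vanishing of the degree-$1$ stable homology of Eilenberg--Mac Lane spaces (Lemma~\ref{lm-eml1}) together with universal coefficients. This is precisely where the additivity of $A$ and the special form of $B$ enter, and none of it appears in your proposal. Your treatment of the top and middle rows and of the commutativity is consistent with the paper once this is supplied, though for the middle row the interchange of the colimit with the product and the ${\rm Hom}$ still has to be justified, which the paper does via the stationarity results of Lemmas~\ref{lmf} and~\ref{lmf2}.
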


\smallskip

On termine en donnant deux résultats de comparaison dans lesquels la catégorie but des foncteurs est une catégorie de Grothendieck quelconque. Comme on le verra dans la section~\ref{so}, ils se déduisent des précédents par un argument élémentaire de changement de base (on pourrait bien sûr en donner quelques autres conséquences).

\begin{hyp}
 Dans la suite, $\E$ désigne une catégorie de Grothendieck.
\end{hyp}

\begin{thm}\label{th-db}
 Supposons qu'il existe un entier $N>0$ tel que, pour tous objets $a$ et $b$ de $\A$, le sous-groupe de torsion du groupe abélien $\A(a,b)$ soit annulé par $N$.

Soient $d,n,i\in\mathbb{N}$ tels que $d\leq n$, $F$ un foncteur de $\Pol_d(\A;\E)$, $G$ un foncteur de $\Pol_n(\A;\E)$. Alors le morphisme canonique
$${\rm Ext}^i_{\Pol_n(\A,\E)}(F,G)\to {\rm Ext}^i_{\fct(\A,\E)}(F,G)$$
est un isomorphisme pour $i\leq \big[\frac{n-d+1}{N}\big]$ et un monomorphisme pour $i=\big[\frac{n-d+1}{N}\big]+1$.

Le même résultat vaut si l'on échange $F$ et $G$ dans les groupes d'extensions.
\end{thm}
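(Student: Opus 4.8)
The plan is to deduce Theorem~\ref{th-db} from Theorem~\ref{th1} by a base-change argument, reducing the case of a general Grothendieck category $\E$ to the case $\E=\F_p-\mathbf{Mod}$ treated earlier. First I would observe that the hypothesis ``le sous-groupe de torsion de $\A(a,b)$ est annulé par $N$'' is the uniform integral version of the $p$-local hypothesis in Theorem~\ref{th1}: if we write $N=\prod_p p^{r_p}$, then for each prime $p$ the $p$-primary torsion of every $\A(a,b)$ is bounded by $p^{r_p}$, and moreover the bound $r_p$ is \emph{uniform} in $a,b$; in particular $p^{r_p}\mid N$ so $\big[\frac{n-d+1}{N}\big]\leq\big[\frac{n-d+1}{p^{r_p}}\big]$ for every $p$, which is exactly why the coarser bound with $N$ appears in the conclusion. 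So the arithmetic content is already captured prime by prime.

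Next I would reduce the target category. The key point is that extension groups in $\fct(\A,\E)$ and in $\Pol_n(\A,\E)$ behave well under an exact cocontinuous functor $\E\to\E'$, and more usefully that for a Grothendieck category one can test exactness and compute $\mathrm{Ext}$ against a generating family. The cleanest route is: since $\E$ is a Grothendieck category it has enough injectives, and the polynomial-degree filtration and the adjoints $p_n$, $q_n$ are defined pointwise on $\E$-valued functors exactly as on module-valued ones (the cross-effects only involve finite direct sums and kernels/cokernels in $\E$), so the comparison map $\mathrm{Ext}^i_{\Pol_n(\A,\E)}(F,G)\to\mathrm{Ext}^i_{\fct(\A,\E)}(F,G)$ can be analysed through the same resolution machinery. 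Concretely, I would use that $\Pol_n(\A,\E)$ is a thick subcategory (stated in the excerpt) so the map is always a monomorphism in each degree, and that surjectivity/injectivity of the comparison map in a range is detected after applying $\mathrm{Hom}_\E(e,-)$ for $e$ running over a set of generators of $\E$; this turns an $\E$-valued statement into a family of abelian-group-valued, hence ultimately $\F_p$-valued, statements to which Theorem~\ref{th1} (summed over the primes dividing $N$) applies with the uniform bound $p^{r_p}$.

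The main obstacle I anticipate is making the base-change reduction fully rigorous at the level of derived functors rather than just objectwise: one must check that forming cross-effects, the truncation functors $i_n,p_n,q_n$, and the formation of injective resolutions are all compatible with the exact functor $\F(\A;\mathbb{F}_p)\to\fct(\A,\E)$ (or with the evaluation functors $\mathrm{Hom}_\E(e,-)$), so that a hypercohomology or spectral-sequence comparison identifies the two Ext-groups in the stated range. I would handle this by reducing to $\mathbb{Z}$-coefficients first (since $\Pol_n$ and the adjoints commute with the forgetful and change-of-rings functors), then decomposing the integral statement into its $p$-primary pieces using the uniform annihilation by $N$; the degree bound $\big[\frac{n-d+1}{N}\big]$ is then the minimum over $p\mid N$ of the prime-by-prime bounds, and the final ``monomorphism in degree $\big[\frac{n-d+1}{N}\big]+1$'' follows from thickness of $\Pol_n(\A,\E)$ as in Example~\ref{ex-cl}. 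The symmetric statement (exchanging $F$ and $G$) is obtained by the same argument applied to the opposite category, exactly as in Theorem~\ref{th1}.
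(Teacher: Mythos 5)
Your overall strategy --- deduce Theorem~\ref{th-db} from Theorem~\ref{th1} by base change, using that $N=\prod_p p^{r_p}$ gives a uniform bound $p^{r_p}$ on the $p$-primary torsion with $\big[\frac{n-d+1}{N}\big]\leq\big[\frac{n-d+1}{p^{r_p}}\big]$ --- is the paper's strategy. But the two steps where you defer the work are precisely where the content lies, and as stated neither would go through. First, the passage from $\mathbb{F}_p$-coefficients to $\mathbb{Z}$-coefficients is not a matter of ``decomposing the integral statement into its $p$-primary pieces'': the Ext-groups in question do not split that way. The paper's argument uses the change-of-rings adjunctions $\sigma=-\otimes\mathbb{F}_p$ and $\tau={\rm Hom}(\mathbb{F}_p,-)$ and the four-term sequences $0\to {}_pF\to F\xrightarrow{\cdot p}F\to F/p\to 0$ to show that the kernel and cokernel of the comparison map are $p$-divisible for every prime $p$, and then --- this is the ingredient entirely absent from your proposal --- invokes the characteristic-zero Theorem~\ref{th-fol} to show they are torsion groups; $p$-divisibility for all $p$ alone proves nothing (think of $\mathbb{Q}$). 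Second, the passage from $\mathbf{Ab}$ to a general Grothendieck category $\E$ cannot be done by ``applying ${\rm Hom}_\E(e,-)$ for generators $e$'': that functor is not exact and does not compute Ext in $\fct(\A,\E)$ objectwise. The paper's mechanism is formula~(\ref{eqa4}), $\mathbf{L}_\bullet q_n(F)(a)={\rm Tor}_\bullet^\A\big(q_n(\mathbb{Z}[\A(-,a)]),F\big)$, which shows that the required vanishing $\mathbf{L}_j q_n(F)=0$ for $F\in\Pol_d(\A,\E)$ and $0<j\leq\frac{n-d+1}{N}$ is governed by the single integral object $q_n(\mathbb{Z}[\A(-,a)])$ of $\F(\A^{op};\mathbb{Z})$ and is therefore independent of $\E$; the conclusion then comes from the Grothendieck spectral sequence~(\ref{gr2}) (and dually~(\ref{gr1}) for the statement with $F$ and $G$ exchanged).

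A smaller but real error: the monomorphism in degree $\big[\frac{n-d+1}{N}\big]+1$ does not ``follow from thickness of $\Pol_n(\A,\E)$''. Thickness only gives an isomorphism in degrees $\leq 1$ and a monomorphism in degree $2$; the monomorphism one degree beyond the isomorphism range is the edge of the spectral sequence~(\ref{gr2}) once one knows $\mathbf{L}_j q_n(F)=0$ for $0<j\leq\big[\frac{n-d+1}{N}\big]$, so it comes from the same vanishing statement as the isomorphisms, not from a formal property of the subcategory.
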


\begin{thm}\label{th-anag} Supposons que la condition suivante soit satisfaite :
 
 Pour tout objet $a$ de $\A$ et tout nombre premier $p$, il existe $r\in\mathbb{N}$ tel que, pour tout objet $b$ de $\A$, la torsion $p$-primaire du groupe abélien $\A(a,b)$ soit bornée par $p^r$.

Alors, pour tous objets $F$ et $G$ de $\F_\infty(\A;\E)$, le morphisme canonique
$${\rm Ext}^*_{\Pol_\infty(\A,\E)}(F,G)\to {\rm Ext}^*_{\fct(\A,\E)}(F,G)$$
est un isomorphisme.

La réciproque est vraie si $\E$ est la catégorie des groupes abéliens ; dans ce cas, si la condition précédente n'est pas réalisée, on peut trouver des foncteurs additifs $F$ et $G$ tels que le morphisme précédent ne soit pas un isomorphisme en degré cohomologique au plus $3$.
\end{thm}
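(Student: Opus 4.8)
The plan is to establish the two implications separately, the entire arithmetic content being concentrated at each prime $p$ and already encoded in Theorem~\ref{th2}; the passage to an arbitrary Grothendieck target is the elementary base change announced for Section~\ref{so}.

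For the direct implication I would argue as follows. Fix a prime $p$: the hypothesis of Theorem~\ref{th-anag} restricted to $p$ is verbatim assertion~\emph{1.} of Theorem~\ref{th2}, which therefore yields that $\mathrm{Ext}^*_{\F_\infty(\A;\mathbb{F}_p)}(F,G)\to\mathrm{Ext}^*_{\F(\A;\mathbb{F}_p)}(F,G)$ is an isomorphism for all analytic $F$ and $G$. The characteristic-zero situation is free of charge, being covered by Theorem~\ref{th-fol} (with $d=\infty$ and no hypothesis on $\A$). To assemble these into the statement over $\E$, I would use that the comparison map is controlled by the right derived functors of the right adjoint $p_\infty$ of the exact, fully faithful inclusion $i_\infty$: since $i_\infty$ is left adjoint to $p_\infty$ one has $p_\infty i_\infty\simeq\mathrm{id}$, so the hyper-$\mathrm{Ext}$ spectral sequence $\mathrm{Ext}^s_{\Pol_\infty(\A,\E)}(F,R^t p_\infty(i_\infty G))\Rightarrow\mathrm{Ext}^{s+t}_{\fct(\A,\E)}(i_\infty F,i_\infty G)$ shows that the comparison map is an isomorphism for every $F$ as soon as $R^t p_\infty(i_\infty G)=0$ for all $t>0$. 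This vanishing is an assertion about an $\E$-valued functor; testing it after passing to a module target, and then to $\mathbb{F}_p$- and $\mathbb{Q}$-coefficients, brings us back to the two cases just treated. The only care needed is the compatibility of $p_\infty$, and of the colimit defining analyticity, with these reductions --- the routine bookkeeping of Section~\ref{so}.

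For the converse I take $\E=\mathbf{Ab}$ and assume the condition fails: there are a prime $p$ and an object $a$ such that for every $r$ some $b$ has $p$-primary torsion in $\A(a,b)$ exceeding $p^r$. Then assertion~\emph{1.} of Theorem~\ref{th2} fails for this $p$, so by the equivalence \emph{1.}$\Leftrightarrow$\emph{3.} there are additive functors $A,B$ over $\mathbb{F}_p$ for which $\mathrm{Ext}^2_{\F_\infty(\A;\mathbb{F}_p)}(A,B)\to\mathrm{Ext}^2_{\F(\A;\mathbb{F}_p)}(A,B)$ is not an isomorphism. Proposition~\ref{pr-ext2} makes the obstruction explicit: the two groups are $\mathrm{Hom}_{\F(\A;\mathbb{F}_p)}(A,-)$ applied respectively to $\underset{i}{\col}\,\mathrm{Ext}^1_{\mathbb{Z}}(-,\mathbb{Z}/p)\circ(T/p^i)$ and to $\mathrm{Ext}^1_{\mathbb{Z}}(-,\mathbb{Z}/p)\circ T$, and they differ precisely because unbounded $p$-torsion in the values of $T$ (which we arrange by choosing $T$ from a family of objects $b$ witnessing the failure) prevents the colimit in $i$ from reaching $\mathrm{Ext}^1_{\mathbb{Z}}(-,\mathbb{Z}/p)\circ T$. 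Viewing $A$ and $B$ as additive functors valued in $\mathbf{Ab}$ and comparing $\mathrm{Ext}$ over $\mathbb{F}_p$ with $\mathrm{Ext}$ over $\mathbb{Z}$ through the universal-coefficient (Bockstein) change-of-rings long exact sequence for $\mathbb{F}_p$-valued functors, which shifts cohomological degree by at most one, transports this degree-$2$ obstruction into degree $2$ or $3$ over $\mathbf{Ab}$ --- exactly the asserted bound.

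The main obstacle is the converse: the direct implication is formal once Theorem~\ref{th2}, Theorem~\ref{th-fol} and the base change are available. The delicate points on the converse are, first, to choose $T$ and an additive $A$ so that $\mathrm{Hom}_{\F(\A;\mathbb{F}_p)}(A,-)$ does not annihilate the discrepancy between the colimit and $\mathrm{Ext}^1_{\mathbb{Z}}(-,\mathbb{Z}/p)\circ T$ --- i.e. to turn unbounded torsion into a genuinely non-stabilising colimit that is still detected by $A$ --- and, second, to track the at-most-one degree shift through the change-of-rings sequence so as to guarantee that the witnessing non-isomorphism survives in cohomological degree $\le 3$ rather than cancelling. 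Both phenomena are local at the single prime $p$, so no interaction between distinct primes has to be controlled.
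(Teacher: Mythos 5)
Your overall architecture is the paper's: Theorem~\ref{th-anag} is indeed deduced from Theorem~\ref{th2} applied one prime at a time, together with Theorem~\ref{th-fol} for the rational part, assembled by the change-of-rings adjunctions $(\sigma_*,u_*,\tau_*)$ and a base-change to general $\E$; the converse rests on the equivalence \emph{1.}$\Leftrightarrow$\emph{3.} of Theorem~\ref{th2}, Proposition~\ref{pr-ext2} and Lemma~\ref{lmelc}. But two of your steps are waved at rather than carried out, and in both cases what is missing is the actual content. For the passage to an arbitrary Grothendieck category, ``testing the vanishing of $\mathbf{R}^t p_\infty$ after passing to a module target'' is not a well-defined reduction: a Grothendieck category carries no canonical exact comparison with a module category that transports these derived functors. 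The paper instead (i) reduces to the case where $\E$ is $k$-linear over a prime field via the subcategories $\E_k$, and (ii) uses that $\Pol_\infty(\A,\E)$ is generated by the functors $q_d(k[\A(a,-)])\otimes T$ with $T\in\E$, for which the adjunction ${\rm Hom}(X\otimes T,G)\simeq{\rm Hom}(X,{\rm Hom}_\E(T,-)\circ G)$ derives into a spectral sequence with $E_2$-term ${\rm Ext}^i_{\F(\A;k)}(X,{\rm Ext}^j_\E(T,-)\circ G)$, valid both in $\fct$ and in $\Pol_\infty$. Note also that over $\mathbf{Ab}$ the paper does not argue via vanishing of $\mathbf{R}^t p_\infty$: it shows directly that the kernel and cokernel of the comparison map are $p$-divisible for every $p$ (using $0\to{}_pF\to F\xrightarrow{.p}F\to F/p\to 0$) and are torsion (by Theorem~\ref{th-fol}), hence vanish.

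On the converse, the point you yourself flag as delicate is a genuine gap as written. The Bockstein long exact sequence relating ${\rm Ext}^*_{\F(\A;\mathbb{F}_p)}(A,B)$ to ${\rm Ext}^*_{\F(\A;\mathbb{Z})}(A,B)$ does not formally transport a degree-$2$ failure over $\mathbb{F}_p$ into a degree-$\leq 3$ failure over $\mathbb{Z}$: a class of ${\rm Ext}^2_{\F(\A;\mathbb{F}_p)}(A,B)$ that is not in the image of the comparison map could a priori have nonzero connecting image in ${\rm Ext}^4_{\F(\A;\mathbb{F}_p)}(A,B)$, hence fail to lift to ${\rm Ext}^3_{\F(\A;\mathbb{Z})}(A,B)$, and the diagram chase then produces nothing in degree $\leq 3$. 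To conclude one must identify explicitly where the non-surjectivity lands integrally, which is what the explicit computations behind Proposition~\ref{pr-ext2} (run over $\mathbb{Z}$ with the complex $D^*_{(1)}$, cf.\ Lemma~\ref{lm-eml1} and Remarque~\ref{rqf}) provide: the ${\rm Ext}^2$ between additive functors is always unaffected over $\mathbb{Z}$, and the obstruction produced by unbounded $p$-torsion genuinely appears in ${\rm Ext}^3$. Without this supplementary computation the ``at most one degree shift'' argument does not close.
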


\section{Outils et premières réductions}\label{so}

\subsection{Adjonctions et suites spectrales de Grothendieck}
Pour tout entier naturel $d$, le foncteur $i_d : \Pol_d(\A,\E)\to\fct(\A,\E)$ possède un adjoint à droite $p_d$ et un adjoint à droite $q_d$ car $i_d$ commute aux limites et colimites (les catégories sont de Grothendieck). Explicitement, $p_d(F)$ (resp. $q_d(F)$) est le plus grand sous-foncteur (resp. quotient) de $F$ polynomial de degré au plus $d$ ; on peut l'écrire aussi comme noyau (resp. conoyau) d'un morphisme naturel utilisant une variante des effets croisés. On note aussi que la précomposition par un foncteur additif ou la postcomposition par un foncteur exact commutent aux foncteurs $i_d$, $p_d$ et $q_d$.

Si $\E$ est la catégorie des modules sur un anneau commutatif $k$, on dispose également des utiles formules suivantes pour les dérivés de ces foncteurs :
\begin{equation}\label{eqa1}
\mathbf{R}^\bullet p_d(F)(a)={\rm Ext}^\bullet_{\F(\A;k)}\big(q_d(k[\A(a,-)]),F\big)
\end{equation}

\begin{equation}\label{eqa2}
\mathbf{L}_\bullet q_d(F)(a)={\rm Tor}_\bullet^\A\big(q_d(k[\A(-,a)]),F\big).
\end{equation}

On peut en fait utiliser ces mêmes formules lorsque $\E$ est une catégorie de Grothendieck quelconque, en remarquant qu'on dispose de bifoncteurs
$${\rm Hom} : \F(\A;\mathbb{Z})^{op}\times\fct(\A,\E)\to\E$$
et
$$\underset{\A}{\otimes} : \F(\A^{op};\mathbb{Z})\times\fct(\A,\E)\to\E$$
qu'on peut dériver de la même façon que les Hom et produits tensoriels usuels, obtenant :

\begin{equation}\label{eqa3}
\mathbf{R}^\bullet p_d(F)(a)={\rm Ext}^\bullet\big(q_d(\mathbb{Z}[\A(a,-)]),F\big)
\end{equation}
et
\begin{equation}\label{eqa4}
\mathbf{L}_\bullet q_d(F)(a)={\rm Tor}_\bullet^\A\big(q_d(\mathbb{Z}[\A(-,a)]),F\big)
\end{equation}
pour $F$ dans $\fct(\A,\E)$.

(Toutes ces formules ne sont que des variations du lemme de Yoneda.)

\smallskip

Le foncteur $i_\infty : \Pol_\infty(\A,\E)\to\fct(\A,\E)$ commute également aux colimites (mais pas aux limites) et possède donc un adjoint à droite $p_\infty$. Il s'obtient en prenant la colimite sur $d\in\mathbb{N}$ des $p_d$ (plus précisément, $i_\infty p_\infty\simeq\underset{d\in\mathbb{N}}{\col}i_d p_d$) ; il en est de même pour ses dérivés à droite.

\smallskip

Comme $i_d$ est un foncteur exact, on dispose de suites spectrales de Grothendieck
\begin{equation}\label{gr1}
 E_2^{i,j}={\rm Ext}^i_{\Pol_d(\A,\E)}(G,\mathbf{R}^j p_d(F))\Rightarrow {\rm Ext}^{i+j}_{\fct(\A,\E)}(i_d(G),F)
\end{equation}
(pour $d\in\mathbb{N}\cup\{\infty\}$) et
\begin{equation}\label{gr2}
E^2_{i,j}={\rm Ext}^i_{\Pol_d(\A,\E)}(\mathbf{L}_j q_d(F),G)\Rightarrow {\rm Ext}^{i+j}_{\fct(\A,\E)}(F,i_d(G))
\end{equation}
(pour $d\in\mathbb{N}$).

Pour démontrer nos résultats, nous chercherons donc des zones d'annulation pour $\mathbf{R}^j p_d(F)$ ou $\mathbf{L}_j q_d(F)$ lorsque $j$ est non nul et assez petit par rapport à $d$.

\subsection{Changement de catégorie but}

Nous expliquons ici comment déduire le théorème~\ref{th-db} des théorèmes~\ref{th1} et~\ref{th-fol}, ainsi que le théorème~\ref{th-anag} du théorème~\ref{th2}. On commence par traiter le cas de la catégorie but <<~universelle~>> $\E=\mathbf{Ab}$.

Tous les arguments étant similaires et aisés, on se bornera à montrer le suivant :
si
$${\rm Ext}^*_{\F_\infty(A;k)}(F,G)\to {\rm Ext}^*_{\F(A;k)}(F,G)$$
est un isomorphisme pour tous foncteurs analytiques $F$ et $G$ et tout corps premier $k$, alors il en est de même pour $k=\mathbb{Z}$.

En effet, notons $u : \mathbb{F}_p-\mathbf{Mod}\to\mathbb{Z}-\mathbf{Mod}$ le foncteur d'oubli, $\sigma=-\otimes\mathbb{F}_p$ son adjoint à gauche et $\tau={\rm Hom}\,(\mathbb{F}_p,-)$ son adjoint à droite. Ces adjonctions induisent par postcomposition (qu'on désigne par une étoile en indice) des adjonctions qui se dérivent en des suites spectrales
$$E_2^{i,j}={\rm Ext}^i(({\bf L}^j\sigma_*)(F),G)\Rightarrow {\rm Ext}^{i+j}(F,u_*(G))$$
et
$$E_2^{i,j}={\rm Ext}^i(F,({\bf R}^j\tau_*)(G))\Rightarrow {\rm Ext}^{i+j}(u_*(F),G)$$
avec ${\bf L}^1\sigma_*=\tau_*$ et ${\bf R}^1\tau_*=\sigma_*$ et les dérivés de $\sigma_*$ et $\tau_*$ nuls en degré strictement supérieur à $1$. Tout cela est aussi bien valable entre $\F(\A;\mathbb{Z})$ et $\F(\A;\mathbb{F}_p)$ qu'entre $\F_d(\A;\mathbb{Z})$ et $\F_d(\A;\mathbb{F}_p)$, pour $d\in\mathbb{N}\cup\{\infty\}$. On en déduit que ${\rm Ext}^*_{\F_\infty(\A;\mathbb{Z})}(F,G)\to {\rm Ext}^*_{\F(\A;\mathbb{Z})}(F,G)$ est un isomorphisme si $F$ ou $G$ appartient à l'image essentielle de $u_*$. Pour le cas général on utilise les suites exactes
$$0\to_p\negmedspace F\to F\xrightarrow{.p}F\to F/p\to 0$$
pour obtenir que noyau et conoyau de ${\rm Ext}^*_{\F_\infty(\A;\mathbb{Z})}(F,G)\to {\rm Ext}^*_{\F(\A;\mathbb{Z})}(F,G)$ sont $p$-divisibles pour tout nombre premier $p$ ; comme ce sont aussi des groupes de torsion (à cause du théorème~\ref{th-fol}), ils sont nuls.

\smallskip

Montrons maintenant comment passer de la catégorie but $\mathbf{Ab}$ (ou des différentes catégories d'espaces vectoriels sur les corps premiers) à une catégorie de Grothendieck arbitraire $\E$. Là encore, on se contente du théorème~\ref{th-anag}, le théorème~\ref{th-db} se déduisant de la manière analogue (et même beaucoup plus directe, en utilisant la formule~(\ref{eqa4})) du cas des groupes abéliens.

On remarque d'abord qu'il suffit de le faire lorsque la catégorie de Grothendieck $\E$ est $k$-linéaire, où $k$ est un corps premier (c'est-à-dire que les groupes abéliens $\E(a,b)$ sont tous des $k$-espaces vectoriels). En effet, pour tout corps premier $k$, on dispose d'une catégorie de Grothendieck $k$-linéarisée $\E_k$, à savoir la sous-catégorie pleine de $\E$ des objets dont l'anneau des endomorphismes est une $k$-algèbre. Le foncteur d'inclusion $\E_k\to\E$ possède des adjoints de chaque côté qui se comportent exactement comme dans le cas précédent ($\E=\mathbf{Ab}$), de sorte que le même raisonnement montre que le résultat pour chaque $\E_k$ implique celui pour $\E$.

Ceci établi, on suppose donc $\E$ $k$-linéaire. On montre d'abord que le morphisme canonique
\begin{equation}\label{tech}
 {\rm Ext}^*_{\Pol_\infty(\A,\E)}(F,G)\to {\rm Ext}^*_{\fct(\A,\E)}(F,G)
\end{equation}
est un isomorphisme pour tous foncteurs analytiques $F$ et $G$ (sous l'hypothèse que c'est vrai pour $\E=k-\mathbf{Mod}$) lorsque $F$ est de la forme $X\otimes T$, où $X$ est un foncteur analytique de $\F(\A;k)$ et $T$ un objet de $\E$. Ici, $X\otimes T$ est à comprendre comme la composée de $X$ et du foncteur $-\otimes T : k-\mathbf{Mod}\to\E$. Du fait que le produit tensoriel sur un corps est exact en chaque variable, l'isomorphisme naturel d'adjonction
$${\rm Hom}_{\fct(\A,\E)}(X\otimes T,G)\simeq {\rm Hom}_{\F(\A;\mathbb{Z})}(X,{\rm Hom}_\E(T,-)\circ G)$$
se dérive en une suite spectrale
$$E_2^{i,j}={\rm Ext}^i_{\F(\A;\mathbb{Z})}(X,{\rm Ext}^j_\E(T,-)\circ G)\Rightarrow {\rm Ext}^{i+j}_{\fct(\A,\E)}(X\otimes T,G).$$
Le même résultat vaut en remplaçant $\F(\A;k)$ et $\fct(\A,\E)$ par leurs sous-catégories de foncteurs analytiques, de sorte que le morphisme (\ref{tech}) est bien un isomorphisme pour $F=X\otimes T$ avec $X$. Le cas général s'en déduit parce que la sous-catégorie $\Pol_\infty(\A,\E)$ de $\fct(\A,\E)$ est engendrée par les foncteurs analytiques de cette forme. En effet, elle est clairement engendrée par les foncteurs
$$q_d(k[\A(a,-)]\otimes T)\simeq q_d(k[\A(a,-)])\otimes T$$
où $d$ parcourt $\mathbb{N}$ et $a$ les objets de $\A$, d'où notre assertion.

\subsection{Remarques formelles liées à des propriétés de finitude}

Le morphisme canonique
$${\rm Ext}^i_{\Pol_d(\A,\E)}(F,G)\to {\rm Ext}^i_{\fct(\A,\E)}(F,G)$$
est toujours un isomorphisme pour $i\leq 1$ et un monomorphisme pour $i=2$, car $\Pol_d(\A,\E)$ est une sous-catégorie {\em épaisse} de $\fct(\A,\E)$. Cette propriété est évidente pour $d$ fini (les foncteurs polynomiaux sont définis par l'annulation de certains effets croisés, or les focnteurs d'effets croisés sont exacts --- ils commutent même à toutes les limites et colimites) ; pour $d=\infty$, elle est un peu moins claire (la saturation par colimites d'une sous-catégorie épaisse d'une catégorie de Grothendieck n'est pas toujours épaisse).

Lorsque $\E$ est la catégorie des groupes abéliens (ou une catégorie de modules), la stabilité par extensions des foncteurs polynomiaux provient formellement de ce que $\F_\infty(\A;\mathbb{Z})$ est engendrée par les foncteurs $q_d (\mathbb{Z}[\A(a,-)])$, qui sont de présentation finie dans $\F(\A;\mathbb{Z})$ (de sorte que ${\rm Ext}^1(q_d (\mathbb{Z}[\A(a,-)]),-)$ commute aux colimites filtrantes de monomorphismes). Cet argument montre en fait aussi le résultat lorsque $\E$ est quelconque, à cause de la formule (\ref{eqa3}). En effet, le caractère épais de $\Pol_d(\A,\E)$ est équivalent à l'annulation du foncteur $\mathbf{R}^1 p_d$ sur $\Pol_d(\A,\E)$. Comme on connaît le résultat pour $d$ fini et que $\mathbf{R}^1 p_\infty\simeq\underset{d\in\mathbb{N}}{\col}\mathbf{R}^1 p_d$, il suffit de vérifier que les $\mathbf{R}^1 p_d$ commutent, pour $d$ fini, aux colimites filtrantes de monomorphismes, ce qui est précisément une conséquence formelle du résultat de présentation finie précédent et de (\ref{eqa3}).

\begin{rem}\label{rcfm}
 En revanche, pour $i>1$, le foncteur $\mathbf{R}^i p_d$ ne commute pas toujours aux colimites filtrantes de monomorphismes (quel que soit $d>0$).
\end{rem}

Penchons-nous sur le morphisme canonique
$$\underset{r\geq d}{\col} {\rm Ext}^*_{\F_r(\A;k)}(F,G)\to {\rm Ext}^*_{\F_\infty(\A;k)}(F,G)\,,$$
où $F$ et $G$ sont des foncteurs de $\F_d(\A;k)$ (on se limite ici à une catégorie de modules au but par simple commodité).

Pour des raisons formelles, on peut voir facilement que c'est un isomorphisme lorsque les deux conditions suivantes sont satisfaites :
\begin{enumerate}
 \item $F$ est de type fini ;
\item la catégorie $\F_\infty(\A;k)$ est localement noethérienne (ce qui équivaut à dire que chaque catégorie $\F_d(\A;k)$, pour $d$ fini, est localement noethérienne).
\end{enumerate}

Cela provient de ce que, sous la deuxième hypothèse, on peut former une résolution injective $I^\bullet_\infty$ de $G$ dans $\F_\infty(\A;k)$ en prenant la colimite sur les entiers $r\geq d$ de résolutions injectives $I^\bullet_r$ de $G$ dans $\F_r(\A;k)$ (le caractère localement noethérien de $\F_\infty(\A;k)$ implique qu'un objet $J$ de celle-ci est injectif si ${\rm Ext}^1_{\F_\infty(\A;k)}(F,J)=0$ pour tout $F$ noethérien, mais un foncteur analytique noethérien est polynomial, et nécessairement de présentation finie dans $\F_\infty(\A;k)$, en utilisant encore le caractère localement noethérien de cette catégorie, ce qui permet de conclure par un argument de colimite filtrante).

Mais il existe des petites catégories additives a priori très <<~raisonnables~>> telles que $\F_1(\A;\mathbb{Z})$ (ou $\F_1(\A;\mathbb{F}_p)$) ne soit pas localement noethérienne, par exemple la sous-catégorie pleine de $\mathbf{Ab}$ constituée des groupes abéliens finis : le foncteur associant à un groupe son sous-groupe de torsion $p$-primaire n'est pas de type fini, alors que c'est un sous-foncteur du foncteur d'inclusion, qui est de type fini.

\subsection{Filtration polynomiale sur $\mathbb{F}_p[-]$}

On se fixe ici un nombre premier $p$.

Pour $d\in\mathbb{N}$, on note $Q^d_{(p)}$ le foncteur $q_d\mathbb{F}_p[-] : \mathbf{Ab}\to\mathbb{F}_p-\mathbf{Mod}$ et $S^d_{(p)}$ le noyau de la projection $Q^d_{(p)}\twoheadrightarrow Q^{d-1}_{(p)}$. On a classiquement :
\begin{enumerate}
 \item $S^*_{(p)}$ est un foncteur exponentiel de Hopf (i.e. on dispose d'isomorphismes naturels d'espaces vectoriels gradués $S^*_{(p)}(U\oplus V)\simeq S^*_{(p)}(U)\otimes S^*_{(p)}(V)$ vérifiant les conditions monoïdales usuelles). Ainsi, $S^*_{(p)}(V)$ est naturellement une algèbre graduée (c'est l'algèbre graduée associée à la filtration de l'algèbre de groupe $\mathbb{F}_p[V]$ par les puissances de son idéal d'augmentation) ;
\item cette algèbre graduée est naturellement isomorphe au quotient de l'algèbre (graduée) symétrique sur le $\mathbb{F}_p$-espace vectoriel $\mathbb{F}_p\otimes V$ par l'idéal (homogène) engendré par les éléments $v^{p^n}$, où $v\in V$ est tel que $p^n.v=0$. En particulier, $S^*_{(p)}(V)$ est naturellement isomorphe à l'algèbre symétrique sur $\mathbb{F}_p\otimes V$ si $V$ n'a pas de $p$-torsion, et $S^d_{(p)}(\mathbb{Z}/p^r)$ est de dimension $1$ pour $d<p^r$ et nul sinon (on pourra par exemple se référer à \cite{Passi}, chap.~VIII) ;
\item la description duale suivante (où l'exposant $\vee$ indiquant la dualité des $\mathbb{F}_p$-espaces vectoriels) nous sera également utile : $Q_d(V)^\vee\hookrightarrow\mathbb{F}_p[V]^\vee\simeq Fct(V,\mathbb{F}_p)$ (espace vectoriel des fonctions ensemblistes $V\to\mathbb{F}_p$) s'identifie au sous-espace $Pol_d(V,\mathbb{F}_p)$ des fonctions polynomiales de degré au plus $d$.
\end{enumerate}

Un fait élémentaire mais fondamental dans tout ce travail est le suivant :
\begin{enumerate}
 \item pour $p^i>d$, l'inclusion $Pol_d(V/p^i,\mathbb{F}_p)\hookrightarrow Pol_d(V,\mathbb{F}_p)$ qu'induit la projection $V\twoheadrightarrow V/p^i$ est une égalité. Comme corollaire, sous la même hypothèse, le foncteur d'inclusion $\F_d(\A/p^i;\mathbb{F}_p)\to\F_d(\A;\mathbb{F}_p)$ est une égalité (nous l'avons utilisé pour définir la flèche de la proposition~\ref{pr3}) ;
\item en particulier, si l'on note $Pol(U,V)$ (où $U$ et $V$ sont deux groupes abéliens) le groupe abélien des fonctions polynomiales $U\to V$ (i.e. la colimite sur $d$ des $Pol_d(U,V)$), le morphisme canonique
$$\underset{i\in\mathbb{N}}{\col} Pol(V/p^i,\mathbb{F}_p)\to Pol(V,\mathbb{F}_p)$$
est un isomorphisme ;
\item comme l'inclusion $Pol(U,\mathbb{F}_p)\hookrightarrow Fct(U,\mathbb{F}_p)$ est une égalité si $U$ est un $p$-groupe abélien, on en déduit une inclusion naturelle
\begin{equation}\label{eqp}
Pol(V,\mathbb{F}_p)\hookrightarrow\underset{i\in\mathbb{N}}{\col} Fct(V/p^i,\mathbb{F}_p)
\end{equation}
qui est un isomorphisme si $V$ est de type fini.
\end{enumerate}

\subsection{Complexes de type Koszul}

La structure exponentielle graduée sur $S^*_{(p)}$ ($p$ étant toujours un nombre premier fixé) permet de définir, pour tout $n\in\mathbb{N}$, un complexe $K^n_*$ :
$$0\to\mathbb{F}_p\otimes\Lambda^n\to S^1_{(p)}\otimes\Lambda^{n-1}\to S^2_{(p)}\otimes\Lambda^{n-2}\to\dots\to S^{n-1}_{(p)}\otimes\Lambda^1\to S^n_{(p)}\to 0$$
de foncteurs $\mathbf{Ab}\to\mathbb{F}_p-\mathbf{Mod}$. Ce complexe est un quotient du complexe de Koszul usuel, où apparaissent les puissances symétriques.

Explicitement, la différentielle $\Lambda^i\otimes S^{n-i}_{(p)}\to\Lambda^{i-1}\otimes S^{n-i+1}_{(p)}$ (le terme de gauche devant être tensorisé par $\mathbb{F}_p$ si $i=n$) est composée de la flèche $\Lambda^i\otimes S^{n-i}_{(p)}\to\Lambda^{i-1}\otimes\Lambda^1\otimes S^{n-i}_{(p)}=\Lambda^{i-1}\otimes S^1_{(p)}\otimes S^{n-i}_{(p)}$ induite par le coproduit $\Lambda^i\to\Lambda^{i-1}\otimes\Lambda^1$ de l'algèbre extérieure et de la flèche $\Lambda^{i-1}\otimes S^1_{(p)}\otimes S^{n-i}_{(p)}\to\Lambda^{i-1}\otimes S^{n-i+1}_{(p)}$ induite par le produit $S^1_{(p)}\otimes S^{n-i}_{(p)}\to S^{n-i+1}_{(p)}$ de l'algèbre $S^*_{(p)}$.

Notons $H_*(n)$ l'homologie de $K^n_*$, que l'on gradue de sorte que $S^{n-i}_{(p)}\otimes\Lambda^i$
se trouve en degré $i$.

\begin{pr}\label{kosz}
 Si la torsion $p$-primaire du groupe abélien $V$ est bornée par $p^r$, alors $H_i(n)(V)=0$ pour $n>p^r i$.
\end{pr}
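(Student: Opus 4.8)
The plan is to exploit the multiplicative (Hopf–exponential) nature of the complex $K^n_*$ and reduce the computation to cyclic groups. Both $S^*_{(p)}$ and the exterior power functor $\Lambda^*$ are exponential, and the Koszul-type differential is assembled from the coproduct of $\Lambda^*$ and the product of $S^*_{(p)}$, which are morphisms of exponential functors. Assembling the $K^n_*$ into a single bigraded complex $K(V)=\bigoplus_n K^n_*(V)$ — with homological degree the exterior degree, and internal degree $n$ the sum of the symmetric and exterior degrees — this should yield a natural isomorphism of bicomplexes $K(U\oplus V)\cong K(U)\otimes K(V)$ compatible with both gradings. Since we work over the field $\mathbb{F}_p$, the Künneth theorem carries no Tor term and gives
$$H_i(n)(U\oplus V)\cong\bigoplus_{\substack{i_1+i_2=i\\ n_1+n_2=n}}H_{i_1}(n_1)(U)\otimes H_{i_2}(n_2)(V).$$
Checking carefully that the Koszul differential is multiplicative, i.e.\ that it corresponds to $d_U\otimes 1\pm 1\otimes d_V$ under this identification, is the main technical point; everything after it is essentially bookkeeping.

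Next I would reduce to finitely generated $V$. Each $S^d_{(p)}$ and each $\Lambda^i$ is polynomial, hence commutes with filtered colimits, so the finite complex $K^n_*$ and its homology commute with filtered colimits. Writing $V$ as the filtered colimit of its finitely generated subgroups $W$ — each of which has $p$-primary torsion contained in that of $V$, hence still annihilated by $p^r$ — it suffices to treat $V$ finitely generated. By the structure theorem and the Künneth formula above, it then suffices to compute $H_*(n)$ on indecomposable cyclic groups and to record, for each, the pairs $(i,n)$ in which the homology is nonzero.

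The cyclic computations are direct. For $V=\mathbb{Z}/q^s$ with $q\neq p$ one has $\mathbb{F}_p\otimes V=0$, so all positive-degree terms of $K^n_*(V)$ vanish and the homology is concentrated at $(i,n)=(0,0)$. For $V=\mathbb{Z}$ the algebras $S^*_{(p)}(\mathbb{Z})=\mathbb{F}_p[x]$ and $\Lambda^*$ give the usual one-variable Koszul resolution of $\mathbb{F}_p$, acyclic in positive internal degree, so again the homology lives only at $(0,0)$. The essential case is $V=\mathbb{Z}/p^s$ with $1\le s\le r$: here $S^*_{(p)}(V)=\mathbb{F}_p[x]/(x^{p^s})$ and $\Lambda^i(V)=0$ for $i\ge 2$, so $K^n_*(V)$ reduces to the two-term complex $S^{n-1}_{(p)}(V)\to S^n_{(p)}(V)$ given by multiplication by $x$. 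This map is an isomorphism for $n<p^s$ and zero for $n=p^s$ (both terms vanishing for $n>p^s$), so the homology is $\mathbb{F}_p$ at $(i,n)=(1,p^s)$ and zero otherwise.

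Finally I would combine these via iterated Künneth. Writing a finitely generated $V$ whose $p$-torsion is annihilated by $p^r$ as a direct sum of cyclic groups, a nonzero summand of $H_i(n)(V)$ selects from each cyclic factor a class in its nonvanishing set; the only classes of positive homological degree are the ones at $(1,p^{s})$ coming from the $\mathbb{Z}/p^{s}$-factors, all with $s\le r$. To reach total homological degree $i$ one must therefore select exactly $i$ such factors, contributing total internal degree $\sum p^{s}\le i\,p^r$. Hence $H_i(n)(V)=0$ whenever $n>p^r i$, which is the assertion. Beyond the multiplicativity of $K$, the only point requiring care is to keep the two gradings straight throughout the Künneth decomposition; the indecomposable computations and the final counting are routine.
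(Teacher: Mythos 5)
Votre démonstration est correcte et suit essentiellement la même stratégie que celle de l'article : réduction au cas de type fini par commutation aux colimites filtrantes, exponentialité du bifoncteur $H_*(\bullet)$ via la structure multiplicative de $K^\bullet_*$, puis calcul explicite sur les groupes cycliques (où le complexe se réduit à $S^{n-1}_{(p)}\otimes\Lambda^1\to S^n_{(p)}$, avec pour seule homologie en degré strictement positif un $\mathbb{F}_p$ en $(i,n)=(1,p^s)$ pour $V=\mathbb{Z}/p^s$, $s\leq r$). Votre dernier paragraphe ne fait qu'expliciter le décompte via Künneth que l'article laisse implicite dans la phrase «~il suffit d'établir la propriété lorsque $V$ est un groupe cyclique~».
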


\begin{proof}
 Les foncteurs $\Lambda^i$ et $S^i_{(p)}$, donc aussi $H_i(n)$, commutent aux colimites filtrantes, de sorte qu'il suffit de montrer le résultat lorsque $V$ est de type fini. De plus, le foncteur bigradué $H_*(\bullet)$ est exponentiel (on dispose d'iso\-morphismes naturels $H_*(\bullet)(U\oplus V)\simeq H_*(\bullet)(U)\otimes H_*(\bullet)(V)$ de $\mathbb{F}_p$-espaces vectoriels bigradués), car on dispose d'isomorphismes naturels $K^\bullet_*(U\oplus V)\simeq K^\bullet_*(U)\otimes K^\bullet_*(V)$ compatibles aux différentielles (cela découle de la même propriété, aisée et bien connue, du complexe de Koszul usuel).

On en déduit qu'il suffit d'établir la propriété lorsque $V$ est un groupe cyclique. Dans ce cas, on a $\Lambda^i(V)=0$ pour $i>1$, de sorte que le complexe se réduit au morphisme canonique (produit) $S^{n-1}_{(p)}(V)\otimes V\to S^n_{(p)}(V)$. Celui-ci est surjectif (c'est-à-dire que $H_0(n)(V)$ est nul) sauf pour $n=0$, et injectif (c'est-à-dire que $H_1(n)(V)$ est nul) sauf si $S^n_{(p)}(V)$ est nul et $S^{n-1}_{(p)}(V)$ non nul (auquel cas il est isomorphe à $\mathbb{F}_p$). Cela arrive seulement si $V$ est d'ordre fini, disons $p^t$, et $n=p^t$. Comme $t\leq r$ par hypothèse, on a dans tous les cas $H_i(n)(V)=0$ pour $n>p^r i$, comme souhaité.
\end{proof}

\subsection{Un résultat d'annulation à la Pirashvili}

On commence par rappeler le résultat classique suivant, établi par Pirashvili dans \cite{P-add}) ; son utilité est constante dans les considérations cohomologiques sur les catégories de foncteurs d'une catégorie additive vers une catégorie de modules. 

\begin{lm}[Pirashvili]\label{lmpira}
 Si $F$ est un foncteur additif de $\F(\A;k)$ et $A$ et $B$ sont deux foncteurs réduits (c'est-à-dire nuls en $0$), ${\rm Ext}^*_{\F(A;k)}(F,A\otimes B)$ et ${\rm Ext}^*_{\F(A;k)}(A\otimes B,F)$ sont nuls. Plus généralement, si $F$ est un objet de $\F_d(A;k)$ et $A_0,A_1,\dots, A_d$ sont $d+1$ foncteurs réduits, ${\rm Ext}^*_{\F(A;k)}(F,A_0\otimes\dots\otimes A_d)$ et ${\rm Ext}^*_{\F(A;k)}(A_0\otimes\dots\otimes A_d,F)$ sont nuls.
\end{lm}

Ce lemme nous suffirait si nous restreignions nos énoncés au cas où l'un des deux arguments des groupes d'extensions est de degré $1$. Pour traiter de foncteurs polynomiaux de degré arbitraire, nous aurons besoin de la généralisation suivante.

\begin{lm}\label{pira-gl}
 Soient $d$, $i$, $j$ des entiers naturels et $A$, $B$ des foncteurs réduits de $\F(\A;\mathbb{F}_p)$. On suppose que ${\rm Ext}^p_{\F(\A;\mathbb{F}_p)}(A,F)$ (resp. ${\rm Ext}^q_{\F(\A;\mathbb{F}_p)}(B,F)$) est nul lorsque $F$ appartient à $\F_d(\A;\mathbb{F}_p)$ et que $p<i$ (resp. $q<j$). Alors ${\rm Ext}^n_{\F(\A;\mathbb{F}_p)}(A\otimes B,T)=0$ pour $T$ dans $\F_{d+1}(\A;\mathbb{F}_p)$ et $n<i+j$. 
\end{lm}

\begin{proof}
 En utilisant l'adjonction entre diagonale $\A\to\A\times\A$ et somme $\A\times\A\to\A$ et le fait que $A$ et $B$ sont réduits, on obtient classiquement
$${\rm Ext}^*_{\F(\A;\mathbb{F}_p)}(A\otimes B,T)\simeq {\rm Ext}^*_{\F(\A\times\A;\mathbb{F}_p)}(A\boxtimes B,cr_2(T)),$$
où $\boxtimes$ désigne le produit tensoriel extérieur.

Par ailleurs, pour tout bifoncteur $X$, on dispose d'une suite spectrale
$$E_2^{p,q}={\rm Ext}^p_{\F(\A;\mathbb{F}_p)}(A,a\mapsto{\rm Ext}^q_{\F(\A;\mathbb{F}_p)}(B,X(a,-)))\Rightarrow {\rm Ext}^{p+q}_{\F(\A\times\A;\mathbb{F}_p)}(A\boxtimes B,X).$$
Pour $X=cr_2(T)$ avec $T$ dans $\F_{d+1}(\A)$, $X(a,-)$ appartient à $\F_d(\A;\mathbb{F}_p)$ pour tout objet $a$ de $\A$, de même que les foncteurs $a\mapsto {\rm Ext}^q_{\F(\A;\mathbb{F}_p)}(B,X(a,-))$. Par conséquent, $E_2^{p,q}=0$ si $p<a$ (par l'hypothèse sur $A$) ou $q<b$ (par l'hypothèse sur $B$), donc en particulier si $p+q<a+b$, d'où le lemme.
\end{proof}

\subsection{Stabilisation à La Dold-Puppe généralisée}\label{pdp}

Pour tout $n\in\mathbb{N}^*$ et tout foncteur $F$ de $\fct(\A,\E)$, on dispose d'un complexe de chaînes naturel $D^{(n)}_*(F)$ de foncteurs de $\fct(\A,\E)$, dont on notera $\Hh^{(n)}_*(F)$ l'homologie, de sorte que :
\begin{enumerate}
 \item\label{r1} pour tout $i$, $D^{(n)}_i(F)(a)$ est facteur direct naturel de $F(a^{\oplus j})$ pour un certain entier $j$ ($j=(n+1)^i$), $D^{(n)}_0(F)=F$ ; en particulier, $D^{(n)}_*$ commute aux limites, aux colimites, et à la précomposition par un foncteur additif ;
\item si $F$ est projectif, chaque foncteur $D^{(n)}_i(F)$ est projectif ;
\item $\Hh^{(n)}_i(F)$ appartient à $\Pol_n(\A,\E)$ pour tout $i$ et  s'identifie à $q_n(F)$ pour $i=0$ ;
\item si $F$ est polynomial de degré au plus $n$, on a $D^{(n)}_i(F)=0$ pour $i>0$ ;
\item\label{r5} on a $\Hh^{(n)}_j(F)=0$ pour $j<i$ si et seulement si  ${\rm Ext}^j_{\fct(\A,\E)}(F,T)=0$ pour tout $j<i$ et tout foncteur $T$ de $\Pol_n(\A,\E)$.
\end{enumerate}

Pour $n=1$, le complexe $D^{(1)}_*(F)$ est (un modèle de) la stabilisation à la Dold-Puppe de $F$ (cf. \cite{DP} et \cite{JMC}) ; en particulier, 
on peut choisir ce complexe de sorte d'obtenir exactement la construction cubique de Mac Lane (\cite{HML}) lorsque $F=\mathbb{Z}[-]$, dont l'homologie est naturellement isomorphe à l'homologie stable des espaces d'Eilenberg-Mac Lane.

Les complexes $D^{(n)}_*$ peuvent être choisis comme les objets simpliciaux associés à la comonade provenant de l'adjonction entre la diagonale itérée et les effets croisés d'ordre $n+1$, allant de $\fct(\A,\E)$ vers la sous-catégorie pleine de $\fct(\A^{n+1},\E)$ constituée des foncteurs multiréduits (i.e. nuls dès qu'un des $n+1$ arguments est nul). On renvoie le lecteur à \cite{JM-cotr} pour les détails. La propriété~\ref{r5}. mentionnée ci-dessus n'étant pas établie explicitement dans cet article, nous la démontrons rapidement ci-dessous. Une autre référence utile, donnant un point de vue légèrement différent sur ces complexes (en termes de $\Gamma$-modules\,\footnote{La construction $D^{(n)}_*$, dont nous n'avons besoin ici que pour une catégorie source $\A$ additive, s'étend sans aucun changement en supposant seulement que $\A$ est pointée et possède des coproduits finis. L'intervention de la catégorie $\Gamma$ des ensembles finis pointés, qui possède une propriété universelle parmi ces catégories, n'est donc pas étonnante ; la mise en évidence de ses liens avec la construction cubique de Mac Lane est due à Pirashvili \cite{P96}.}), est l'article \cite{Ric} (§\,4) de Richter.

Une conséquence aisée de la propriété~\ref{r1} est la suivante : on dispose d'isomorphismes de complexes $q_d(D^{(n)}_*)\simeq D^{(n)}_*\circ q_d$.

\begin{proof}[Démonstration de la propriété~\ref{r5}]
On raisonne par récurrence sur $i$. La propriété est immédiate pour $i\leq 1$, puisque $\Hh^{(n)}_0\simeq q_0$. On suppose donc $i>1$ et l'assertion établie jusqu'à $i-1$.

On note $\kappa$ le noyau de la différentielle $D^{(n)}_1\to D^{(n)}_0={\rm Id}$. Si $q_n(F)=0$, on a donc une suite exacte courte
$$0\to\kappa(F)\to D^{(n)}_1(F)\to F\to 0.$$
Appliquant le foncteur homologique $\Hh^{(n)}_*$, on obtient une suite exacte longue dans laquelle les morphismes
$$\Hh^{(n)}_i(D^{(n)}_1(F))\simeq D^{(n)}_1(\Hh^{(n)}_i(F))\to\Hh^{(n)}_i(F)$$
(l'isomorphisme résulte de la commutation de $\Hh^{(n)}_i$ à la précomposition par un foncteur additif et de la propriété~\ref{r1}.) sont nuls car $\Hh^{(n)}_i(F)$ appartient à $\Pol_n(\A,\E)$, donc des suites exactes courtes
$$0\to\Hh^{(n)}_{j+1}(F)\to\Hh^{(n)}_j(\kappa(F))\to\Hh^{(n)}_j(D^{(n)}_1(F))\to 0.$$

De façon analogue, si $T$ appartient à $\Pol_n(\A,\E)$, lorsque $q_n(F)=0$, on dispose de suites exactes courtes
$$0\to {\rm Ext}^j(D^{(n)}_1(F),T) \to {\rm Ext}^j(\kappa(F),T)\to {\rm Ext}^{j+1}(F,T)\to 0.$$

On conclut alors aussitôt en utilisant l'hypothèse de récurrence et l'observation que $\Hh^{(n)}_j(F)=0$ entraîne $\Hh^{(n)}_j(D^{(n)}_1(F))\simeq D^{(n)}_1(\Hh^{(n)}_j(F))=0$, et que ${\rm Ext}^j(F,T)=0$ pour tout $T$ dans $\Pol_n(\A,\E)$ entraîne ${\rm Ext}^j(D^{(n)}_1(F),T)=0$ pour tout $T$ dans $\Pol_n(\A,\E)$ (en effet, la propriété~\ref{r1}. montre que $D^{(n)}_1$ est adjoint à gauche à un foncteur exact qui préserve le degré polynomial).
\end{proof}

\begin{rem}
 Cette démonstration constitue une variante de la partie formelle des arguments de \cite{Dja-JKT}, §\,2, qui est due à Scorichenko.
\end{rem}

\smallskip

On utilisera aussi la variante duale de $D^{(n)}_*$, qui est un complexe de cochaînes $D_{(n)}^*$, dont on note $\Hh_{(n)}^*$ la cohomologie, et qui vérifie les propriétés suivantes :
\begin{enumerate}
 \item\label{r21} pour tout $i$, $D_{(n)}^i(F)(a)$ est facteur direct naturel de $F(a^{\oplus j})$ pour un certain entier $j$ ($j=(n+1)^i$), $D^{(n)}_0(F)=F$ ; en particulier, $D^{(n)}_*$ commute aux limites, aux colimites, et à la précomposition par un foncteur additif ;
\item si $F$ est injectif, chaque foncteur $D_{(n)}^i(F)$ est injectif ;
\item $\Hh_{(n)}^i(F)$ appartient à $\Pol_n(\A,\E)$ pour tout $i$ et s'identifie à $p_n(F)$ pour $i=0$ ;
\item si $F$ est polynomial de degré au plus $n$, on a $D_{(n)}^i(F)=0$ pour $i>0$ ;
\item\label{r25} on a $\Hh_{(n)}^j(F)=0$ pour $j<i$ si et seulement si  ${\rm Ext}^j_{\fct(\A,\E)}(T,F)=0$ pour tout $j<i$ et tout foncteur $T$ de $\Pol_n(\A,\E)$ ;
\item on dispose d'isomorphismes de complexes $p_d(D_{(n)}^*)\simeq D_{(n)}^*\circ p_d$.
\end{enumerate}

Un corollaire utile des propriétés~\ref{r25}. et ~\ref{r21}. est le résultat suivant.

\begin{pr}\label{pcol}
Pour tous $n\in\mathbb{N}$ et $i\in\mathbb{N}\cup\{\infty\}$, la classe des foncteurs $F$ de $\fct(\A,\E)$ tels que ${\rm Ext}^j_{\fct(\A,\E)}(T,F)=0$ pour $T$ dans $\Pol_n(\A,\E)$ et $j<i$ est stable par colimites filtrantes.
\end{pr}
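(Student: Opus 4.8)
Le plan est de traduire la condition sur les groupes d'extensions en une condition d'annulation de cohomologie, à laquelle la stabilité par colimites filtrantes s'appliquera formellement. Plus précisément, je commencerais par invoquer la propriété~\ref{r25}, qui affirme qu'un foncteur $F$ de $\fct(\A,\E)$ vérifie ${\rm Ext}^j_{\fct(\A,\E)}(T,F)=0$ pour tout $T$ de $\Pol_n(\A,\E)$ et tout $j<i$ si et seulement si $\Hh_{(n)}^j(F)=0$ pour tout $j<i$. La classe considérée coïncide donc exactement avec celle des $F$ tels que $\Hh_{(n)}^j(F)=0$ pour $j<i$, et il s'agit de montrer que cette dernière est stable par colimites filtrantes.

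Soit alors $F=\underset{\alpha}{\col}\,F_\alpha$ une colimite filtrante de foncteurs $F_\alpha$ de cette classe. Je m'appuierais sur la propriété~\ref{r21} : pour chaque $k$, le foncteur $D_{(n)}^k$ envoie $F$ sur un facteur direct naturel d'une évaluation $F(a^{\oplus j})$, et commute donc à toutes les colimites, en particulier aux colimites filtrantes. On obtient ainsi un isomorphisme de complexes de cochaînes $D_{(n)}^*(F)\simeq\underset{\alpha}{\col}\,D_{(n)}^*(F_\alpha)$.

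Il reste à passer à la cohomologie. Comme $\E$ est une catégorie de Grothendieck, les colimites filtrantes y sont exactes ; elles commutent donc au passage à la cohomologie d'un complexe. On en déduit $\Hh_{(n)}^j(F)\simeq\underset{\alpha}{\col}\,\Hh_{(n)}^j(F_\alpha)$ pour tout $j$. Chaque $F_\alpha$ appartenant à la classe, les termes de droite sont nuls pour $j<i$, d'où $\Hh_{(n)}^j(F)=0$ pour $j<i$, c'est-à-dire $F$ dans la classe, à nouveau par la propriété~\ref{r25}.

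Cet énoncé étant un corollaire direct des propriétés~\ref{r21} et~\ref{r25}, je n'anticipe pas de difficulté réelle : le seul point à justifier soigneusement est la commutation de la cohomologie aux colimites filtrantes, qui repose uniquement sur l'exactitude de ces colimites dans une catégorie de Grothendieck. Le cas $i=\infty$ se traite sans modification, puisque l'argument vaut pour chaque $j$ séparément.
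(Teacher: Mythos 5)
Votre argument est correct et coïncide avec celui que l'article sous-entend : la proposition y est présentée comme corollaire direct des propriétés~\ref{r25} (traduction de l'annulation des ${\rm Ext}^j$ en l'annulation de $\Hh_{(n)}^j$ pour $j<i$) et~\ref{r21} (commutation de $D_{(n)}^*$ aux colimites, puisque chaque terme est facteur direct naturel d'une évaluation $F(a^{\oplus j})$), combinées à l'exactitude des colimites filtrantes dans la catégorie de Grothendieck $\E$. Rien à redire.
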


Ce résultat est remarquable dans la mesure où l'on ne peut pas toujours engendrer la catégorie $\Pol_n(\A,\E)$ par des foncteurs ayant une résolution projective de type fini dans $\fct(\A,\E)$ (propriété équivalente, pour un foncteur $T$, à la commutation de ${\rm Ext}^*_{\fct(\A,\E)}(T,-)$ aux colimites filtrantes) --- cf. remarque~\ref{rqf}.\,\ref{rfff} ci-après.

\subsection{Rôle de la torsion $p$-primaire bornée}

Soient $\mathfrak{A}$ une sous-catégorie pleine de $\mathbf{Ab}$ stable par quotients et $\Phi$ un foncteur contravariant de $\mathfrak{A}$ vers les $\mathbb{F}_p$-espaces vectoriels.

\begin{defi}\label{dfst}
 \begin{enumerate}
\item Nous dirons que $\Phi$ est {\em stationnaire} sur un objet $V$ de $\mathfrak{A}$ si le morphisme canonique
$$\underset{n\in\mathbb{N}}{\col}\Phi(V/p^n)\to\Phi(V)$$
est un isomorphisme.
\item Nous dirons que $\Phi$ est {\em stationnaire} s'il est sur tout objet de $\mathfrak{A}$.
\item Nous dirons que $\Phi$ est {\em fortement stationnaire} si pour toute famille $(V_t)_{t\in\E}$ d'objets de $V$, le morphisme canonique
$$\underset{n\in\mathbb{N}}{\col}\prod_{t\in\E}\Phi(V_t/p^n)\to\prod_{t\in\E}\Phi(V_t)$$
est un isomorphisme.
 \end{enumerate}
\end{defi}

Le foncteur $\mathbf{Ab}^{op}\to\mathbb{F}_p-\mathbf{Mod}$ qui apparaîtra à la fin de cette article et dont la stationnarité nous préoccupera est le suivant : 
$${\rm Ext}^1_\mathbb{Z}(-,\mathbb{Z}/p)\simeq {\rm Hom}_\mathbb{Z}(-,\mathbb{Z}/p^\infty)\otimes\mathbb{F}_p\simeq {\rm Hom}_\mathbb{Z}(\mathbb{Z}/p,-)^\vee,$$
où $\mathbb{Z}/p^\infty$ désigne la colimite sur $i\in\mathbb{N}$ des $\mathbb{Z}/p^i$, (on rappelle que l'exposant $\vee$ indique la dualité des $\mathbb{F}_p$-espaces vectoriels), et le dernier isomorphisme est induit par l'application bilinéaire de composition
$${\rm Hom}_\mathbb{Z}(\mathbb{Z}/p,-)\times {\rm Hom}_\mathbb{Z}(-,\mathbb{Z}/p^\infty)\to {\rm Hom}_\mathbb{Z}(\mathbb{Z}/p,\mathbb{Z}/p^\infty)\simeq\mathbb{F}_p.$$

Pour établir la nécessité de conditions de torsion bornée dans nos énoncés, nous aurons besoin du résultat élémentaire suivant.

\begin{lm}\label{lmelc}
 Soit $V$ un groupe abélien. Les conditions suivantes sont équivalentes :
\begin{enumerate}
 \item la torsion $p$-primaire de $V$ est bornée ;
\item le foncteur ${\rm Ext}^1_\mathbb{Z}(-,\mathbb{Z}/p)$ est stationnaire sur $V$.
\end{enumerate}

\end{lm}

\begin{proof}
 Si la torsion $p$-primaire de $V$ est bornée par $p^n$, le morphisme canonique ${\rm Hom}_\mathbb{Z}(V,\mathbb{Z}/p^i)\to {\rm Hom}_\mathbb{Z}(V,\mathbb{Z}/p^\infty)$ est un isomorphisme pour $i\geq n$. Comme
$${\rm Ext}^1_\mathbb{Z}(V/p^i,\mathbb{Z}/p)\simeq{\rm Hom}_\mathbb{Z}(V/p^i,\mathbb{Z}/p^i)\otimes\mathbb{F}_p\simeq{\rm Hom}_\mathbb{Z}(V,\mathbb{Z}/p^i)\otimes\mathbb{F}_p,$$
il s'ensuit que le morphisme canonique
$$\underset{i\in\mathbb{N}}{\col} {\rm Ext}^1_\mathbb{Z}(V/p^i,\mathbb{Z}/p)\to {\rm Ext}^1_\mathbb{Z}(V,\mathbb{Z}/p)$$
est un isomorphisme.

Pour la réciproque, on utilise l'isomorphisme de foncteurs ${\rm Ext}^1_\mathbb{Z}(-,\mathbb{Z}/p)\simeq {\rm Hom}_\mathbb{Z}(\mathbb{Z}/p,-)^\vee$. Dire que l'application naturelle
$$\underset{n\in\mathbb{N}}{\col}{\rm Hom}_\mathbb{Z}(\mathbb{Z}/p,V/p^n)^\vee\to{\rm Hom}_\mathbb{Z}(\mathbb{Z}/p,V)^\vee$$
est surjective signifie que toute forme linéaire sur le $\mathbb{F}_p$-espace vectoriel ${\rm Hom}_\mathbb{Z}(\mathbb{Z}/p,V)$ se factorise par l'application canonique ${\rm Hom}_\mathbb{Z}(\mathbb{Z}/p,V)\to{\rm Hom}_\mathbb{Z}(\mathbb{Z}/p,V/p^n)$ pour $n$ assez grand. Cela implique que cette application est injective pour $n$ assez grand (notant $K_n$ son noyau, $(K_n)$ est une suite décroissante de sous-espaces vectoriels de ${\rm Hom}_\mathbb{Z}(\mathbb{Z}/p,V)$, si elle ne stationne pas en $0$ on peut choisir une base de celui-ci qui contient un élément de $K_n$ pour tout $n$, il suffit de choisir la forme linéaire envoyant tous les éléments de cette base sur $1$ pour contredire notre hypothèse), c'est-à-dire que $p^n.V$ est sans $p$-torsion, ou encore que $p^{n+1}.v=0$ implique $p^n.v=0$ pour tout $v\in V$, i.e. que la torsion $p$-primaire de $V$ est bornée par $p^n$. 
\end{proof}

\section{Démonstrations}\label{sd}

\subsection{Le cas rationnel}

On démontre ici le théorème~\ref{th-fol}. On s'appuiera pour cela sur le résultat crucial (spécifique à la caractéristique nulle) suivante :

\begin{pr}\label{pz}
Pour tous entiers $n,i>0$ et tout $d\in\mathbb{N}\cup\{\infty\}$, les foncteurs $\Hh_{(n)}^i(F)$ sont nuls lorsque $F$ est l'un des foncteurs suivants $\mathbf{Ab}^{op}\to\mathbb{Q}-\mathbf{Mod}$ :
\begin{enumerate}
 \item $Fct(-,\mathbb{Q})$ ;
\item $p_d(Fct(-,\mathbb{Q}))$ (c'est-à-dire $Fct_d(-,\mathbb{Q})$ si $d$ est fini et $Fct(-,\mathbb{Q})$ pour $d=\infty$).
\end{enumerate}
\end{pr}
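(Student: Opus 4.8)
The plan is to prove that the cochain complex $D_{(n)}^*(F)$ is acyclic in positive degrees by peeling off the polynomial part of $F$ and reducing everything to an ${\rm Ext}$-vanishing statement governed by Pirashvili's lemma~\ref{lmpira}. First I would set up a reduction valid for an arbitrary $F$. Since each $D_{(n)}^i$ is exact, the short exact sequence $0\to p_n(F)\to F\to F/p_n(F)\to 0$ yields a long exact sequence in $\Hh_{(n)}^*$; as $p_n(F)$ is polynomial of degree at most $n$ one has $D_{(n)}^i(p_n(F))=0$ for $i>0$, so that $\Hh_{(n)}^i(F)\simeq\Hh_{(n)}^i(F/p_n(F))$ for every $i\ge 1$. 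Because $\Pol_n(\mathbf{Ab}^{op},\mathbb{Q}-\mathbf{Mod})$ is thick, $p_n(F/p_n(F))=0$; hence $\Hh_{(n)}^0(F/p_n(F))=0$ and property~\ref{r25} turns the desired vanishing into the statement
\[{\rm Ext}^*_{\fct}(T,F/p_n(F))=0\quad\text{pour tout }T\in\Pol_n.\]

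The engine for such vanishing is the characteristic-zero structure together with lemma~\ref{lmpira}. I would record that $Fct(-,\mathbb{Q})$ is the dual $\mathbb{Q}[-]^\vee$ of the standard projective generator $\mathbb{Q}[-]=\mathbb{Q}[\mathbf{Ab}(\mathbb{Z},-)]$, hence injective, with $p_d(Fct(-,\mathbb{Q}))=(q_d\mathbb{Q}[-])^\vee$; and that over $\mathbb{Q}$ the polynomial functor $q_d\mathbb{Q}[-]$ splits, through the symmetric-group idempotents, as $\bigoplus_{e\le d}S^e(\mathbb{Q}\otimes-)$. Each homogeneous piece $S^e(\mathbb{Q}\otimes-)^\vee$ is a natural direct summand (again by symmetrization) of the $e$-fold tensor power of the reduced additive functor ${\rm Hom}_\mathbb{Z}(-,\mathbb{Q})$; so for $e>n$, i.e. with at least $n+1$ reduced tensor factors, lemma~\ref{lmpira} gives ${\rm Ext}^*_{\fct}(T,S^e(\mathbb{Q}\otimes-)^\vee)=0$ for every $T\in\Pol_n$. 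This already disposes of the polynomial case (the second functor with $d$ finite): there $F/p_n(F)$ is the finite sum of the $S^e(\mathbb{Q}\otimes-)^\vee$ with $n<e\le d$, so the displayed ${\rm Ext}$-groups vanish and $\Hh_{(n)}^{>0}(p_d(Fct(-,\mathbb{Q})))=0$.

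The case $F=Fct(-,\mathbb{Q})$ (equivalently $d=\infty$) is the delicate one and I expect it to be the main obstacle. Here $F/p_n(F)=Fct(-,\mathbb{Q})/Pol_n(-,\mathbb{Q})$ is no longer analytic. Its largest analytic subfunctor is $Pol(-,\mathbb{Q})/Pol_n(-,\mathbb{Q})=\underset{k}{\col}\,Pol_{n+k}(-,\mathbb{Q})/Pol_n(-,\mathbb{Q})$, a filtered colimit of functors admitting a finite filtration with graded pieces the $S^e(\mathbb{Q}\otimes-)^\vee$ ($e>n$); thus ${\rm Ext}^*_{\fct}(T,Pol/Pol_n)=0$ follows from lemma~\ref{lmpira} combined with the stability of this vanishing under filtered colimits, proposition~\ref{pcol}. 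What remains is the purely non-analytic quotient $Fct(-,\mathbb{Q})/Pol(-,\mathbb{Q})$, and this is where characteristic zero must enter in an essential way.

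The tempting shortcut here — exploiting injectivity of $Fct(-,\mathbb{Q})$ to dimension-shift ${\rm Ext}^i(T,F/p_n(F))\simeq{\rm Ext}^{i+1}(T,Pol_n(-,\mathbb{Q}))$ — is circular, since it lands precisely in extensions between polynomial functors, that is in theorem~\ref{th-fol} itself. I would instead attack the non-analytic quotient directly through the torsion-free acyclicity of the Koszul-type complexes of proposition~\ref{kosz} (the bound $p^r$ with $r=0$), which is exactly the exponential/Koszul input available over $\mathbb{Q}$ but not over $\mathbb{F}_p$; equivalently, the goal is to show that the cotriple cochain complex $D_{(n)}^*$ is acyclic in positive degrees on the injective object $Fct(-,\mathbb{Q})$, i.e. that $\Hh_{(n)}^*$ agrees there with the derived functors $\mathbf{R}^*p_n$, which vanish on injectives. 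Establishing this last acyclicity is the crux of the whole proposition, the rest being formal reductions and applications of~\ref{lmpira} and~\ref{pcol}.
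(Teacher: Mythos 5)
Your formal reductions are sound: the passage from $\Hh_{(n)}^{>0}(F)$ to the vanishing of ${\rm Ext}^*_{\fct}(T,F/p_n(F))$ for $T\in\Pol_n$ via property~\ref{r25}, and the treatment of assertion~2 for finite $d$ by splitting $q_d(\mathbb{Q}[-])$ into symmetric powers, realizing each $S^e(\mathbb{Q}\otimes -)^\vee$ with $e>n$ as a natural direct summand of a tensor product of $e\geq n+1$ reduced functors and invoking lemma~\ref{lmpira}, is essentially the paper's own argument for that case (the paper performs the dual computation on $\Hh^{(n)}_i(T^e(\mathbb{Q}\otimes -))$). The disposal of the analytic part $Pol(-,\mathbb{Q})/Pol_n(-,\mathbb{Q})$ by filtered colimits and proposition~\ref{pcol} is also correct.

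But the proof is not complete: you explicitly leave open the vanishing of ${\rm Ext}^*_{\fct}(T,Fct(-,\mathbb{Q})/Pol(-,\mathbb{Q}))$, which you rightly identify as the crux, and the tool you point to --- the Koszul-type complexes of proposition~\ref{kosz} --- cannot do this job. Those complexes are built entirely out of the graded pieces of the polynomial filtration (symmetric powers) and exterior powers; they compare different polynomial ranges with one another and are blind to the non-analytic quotient $Fct/Pol$ (in the paper they serve for theorem~\ref{th1}, and even there only for the comparison between $\Pol_n$ and $\Pol_m$). The mechanism the paper actually uses, absent from your proposal, is the bar complex of $\bar{\mathbb{Q}}[-]$: in characteristic zero its homology, namely the rational group homology of an abelian group, is the exterior algebra on the rationalization, hence \emph{polynomial} in each degree. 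Iterating (tensor powers of the bar complex and a multicomplex argument) produces, for every $r$, a chain complex with $\bar{\mathbb{Q}}[-]$ in degree $0$, direct sums of $\bar{\mathbb{Q}}[-]^{\otimes N}$ with $N>r$ in positive degrees, and polynomial homology in every degree; taking $r=n$, lemma~\ref{lmpira} kills $\Hh^{(n)}_{>0}$ on the positive-degree terms, the already-established polynomial case kills it on the homology, and a standard spectral sequence argument then forces the vanishing on $\bar{\mathbb{Q}}[-]$ itself, which dualizes to assertion~1. Without this (or an equivalent) input, your argument only establishes assertion~2 for finite $d$.
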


(Pour $n=1$, dans le premier cas, la proposition dit exactement que l'homologie rationnelle stable des espaces d'Eilenberg-Mac Lane est nulle en degrés strictement positifs.)

\begin{proof}
 On commence par la deuxième assertion. On note qu'on peut se limiter au cas où $d$ est fini, puisque $\Hh_{(n)}^i$ commute aux colimites filtrantes.

Le foncteur $p_d(Fct(-,\mathbb{Q}))$ est dual de $q_d(\mathbb{Q}[-])$. Il est classique (cf. \cite{Passi} par exemple) que le gradué de la $\mathbb{Q}$-algèbre d'un groupe abélien $V$ est naturellement isomorphe à l'algèbre symétrique sur $\mathbb{Q}\otimes V$. Or l'algèbre symétrique sur un $\mathbb{Q}$-espace vectoriel est facteur direct naturel de son algèbre tensorielle $T^*$, il suffit donc de vérifier que $\Hh^{(n)}_i(T^d(\mathbb{Q}\otimes -))=0$ (pour $i>0$). Pour $d>n$, cela résulte du lemme~\ref{lmpira} (et de la caractérisation de l'annulation des foncteurs $\Hh^{(n)}_i$ en termes d'annulation cohomologique donnée au paragraphe~\ref{pdp}) --- l'annulation vaut alors même pour $i=0$. Pour $d\leq n$, cela découle de ce que $T^d(\mathbb{Q}\otimes -)$ est de degré inférieur à $n$, de sorte que $D^{(n)}_i(T^d(\mathbb{Q}\otimes -))=0$ pour $i>0$.

Reste à prouver la première assertion, qui équivaut par dualité à l'annulation de $\Hh^{(n)}_i(\mathbb{Q}[-])$ pour $i>0$, où à celle de $\Hh^{(n)}_i(\bar{\mathbb{Q}}[-])$, la barre désignant le noyau de l'idéal d'augmentation. Utilisant que l'homologie rationnelle d'un groupe abélien est naturellement isomorphe à l'algèbre extérieure sur sa rationalisation, qui est facteur direct naturel de puissances tensorielles de cette abélianisation, on obtient un complexe de foncteurs $\mathbf{Ab}\to\mathbb{Q}-\mathbf{Mod}$ de la forme :
$$\dots\to\bar{\mathbb{Q}}[-]^{\otimes j+1}\to\bar{\mathbb{Q}}[-]^{\otimes j}\to\dots\to\bar{\mathbb{Q}}[-]^{\otimes 2}\to\bar{\mathbb{Q}}[-]$$
dont l'homologie est en chaque degré facteur direct d'une puissance tensorielle de la rationalisation. En prenant des produits tensoriels itérés de ce complexe et en utilisant des arguments classiques de multicomplexes, on en déduit par récurrence que, pour tout entier naturel $r$, il existe un complexe de chaînes dont le terme de degré $0$ est $\bar{\mathbb{Q}}[-]$, les termes de degrés strictement positifs sont des sommes directes de foncteurs du type $\bar{\mathbb{Q}}[-]^{\otimes N}$ avec $N>r$, et dont l'homologie est en chaque degré facteur direct d'une somme directe de puissances tensorielles de la rationalisation. Choisissant $r=n$, on voit que $\Hh^{(n)}_i$, pour $i>0$, est nul sur l'homologie de ce complexe (annulation établie en première partie de démonstration) ainsi que sur ses termes de degrés strictement positifs, en utilisant le lemme~\ref{lmpira} (comme plus haut). Par conséquence, un argument standard de suite spectrale montre qu'il en est de même pour son terme de degré nul $\bar{\mathbb{Q}}[-]$, ce qui achève la démonstration.
\end{proof}

\begin{rem}
 Une variante de la deuxième partie de la démonstration consiste à se ramener à la première en utilisant une variation autour du théorème de Dold-Thom (pour la généralisation de la stabilisation à la Dold-Puppe introduite précédemment).
\end{rem}

\begin{proof}[Démonstration du théorème~\ref{th-fol}]
 Il suffit d'établir le théorème lorsque $G$ est de la forme
$$G=p_d(Fct(-,k)\circ\A(-,a))\simeq p_d(Fct(-,k))\circ\A(-,a),$$
(où $a$ parcourt ${\rm Ob}\,\A$), puisque ces foncteurs forment une collection de cogénérateurs de la catégorie $\F_d(\A;k)$. On constate également que $p_d(Fct(-,k))\simeq p_d(Fct(-,\mathbb{Q}))\otimes k$, de sorte qu'on peut supposer $k=\mathbb{Q}$ sans perte de généralité.

Le complexe de cochaînes $D^*_{(d)}(Fct(-,\mathbb{Q})\circ\A(-,a))\simeq D^*_{(d)}(Fct(-,\mathbb{Q}))\circ\A(-,a)$ a une cohomologie nulle en degrés strictement positifs d'après la proposition précédente, et isomorphe à $G$ en degré $0$, il constitue donc une résolution injective de $G$ dans la catégorie $\F(\A;\mathbb{Q})$ (en effet, le foncteur $Fct(-,\mathbb{Q})\circ\A(-,a)$ est injectif).

Utilisant l'isomorphisme $p_d(D^*_{(d)})\simeq D^*_{(d)}\simeq p_d$ et la deuxième partie de la proposition~\ref{pz}, on voit que $p_d\big(D^*_{(d)}(Fct(-,\mathbb{Q})\circ\A(-,a))\big)$ constitue une corésolution de $G$ ; elle est constituée de foncteurs qui sont injectifs dans $\F_d(\A;\mathbb{Q})$.

Comme $p_d$ est adjoint à droite à $i_d$, ces deux observations entraînent le théorème~\ref{th-fol}.
\end{proof}

\begin{rem}
 Une variante de la démonstration précédente consiste à utiliser non pas les complexes $D^{(n)}_*$ ou $D_{(n)}^*$ mais la construction bar sur $\mathbb{Q}[-]$ (ou plutôt sa duale).
\end{rem}

\subsection{Cas de la torsion bornée}

\begin{proof}[Démonstration du théorème~\ref{th1}]
 Les suites spectrales (\ref{gr1}) et (\ref{gr2}) montrent qu'il suffit d'établir que $\mathbf{R}^j p_n(F)=0$ et $\mathbf{L}_j q_n(F)=0$ pour $0<j\leq\frac{n-d+1}{p^r}$, lorsque $F$ appartient à $\F_d(\A;\mathbb{F}_p)$ ; on se contentera de montrer la première assertion, la seconde étant entièrement analogue. On utilise pour cela la formule (\ref{eqa1}).

On vérifie d'abord que le morphisme canonique $\mathbf{R}^j p_n(F)\to\mathbf{R}^j p_m(F)$ (pour $m>n$) est un isomorphisme (sous la condition $0<j\leq\frac{n-d+1}{p^r}$). Cela équivaut à l'annulation de ${\rm Ext}^i_{\F(\A;\mathbb{F}_p)}(S^m_{(p)}\circ\A(a,-),F)$ pour tout objet $a$ de $\A$ et tous entiers $i\leq\frac{n-d+1}{p^r}$ et $m>n$. Il suffit pour cela de montrer que $\Hh^{(d)}_i(S(m))=0$ pour $i\leq\frac{m-d+1}{p^r}$, où $S(m)$ désigne la restriction de $S^m_{(p)}$ à la sous-catégorie pleine de $\mathbf{Ab}$ des groupes abéliens dont la torsion $p$-primaire est bornée par $p^r$.

Cela se montre aussitôt par récurrence sur $d$ en utilisant :
\begin{enumerate}
 \item les suites exactes
$$S(m-j-1)\otimes\Lambda^{j+1}\to S(m-j)\otimes\Lambda^j\to\dots\to S(m-1)\otimes\Lambda^1\to S(m)\to 0$$
où $j=[\frac{n}{p^r}]$ (déduite de la proposition~\ref{kosz})
et
$$0\to\Gamma^m\to\Gamma^{m-1}\otimes\Lambda^1\to\dots\to\Gamma^1\otimes\Lambda^{m-1}\to\Lambda^m\to 0$$
(duale de la suite exacte de Koszul classique ; $\Gamma^j$ est ici le foncteur dual de $\mathbb{F}_p\otimes\Lambda^j$) ;
\item les suites spectrales d'hypercohomologie associées ;
\item le lemme~\ref{pira-gl}.
\end{enumerate}
(Pour $d=1$, la première des suites exactes ci-dessus et le lemme~\ref{lmpira} suffisent. La deuxième suite exacte et le lemme~\ref{pira-gl} permettent de montrer par récurrence sur $d$ que ${\rm Ext}^j(\Lambda^i,F)=0$ si $j<i-d$ et que $F$ est polynomial de degré au plus $d$. Utilisant ce résultat et, de nouveau, la première suite exacte et le lemme~\ref{pira-gl}, on obtient la conclusion recherchée par récurrence sur $d$.)

L'annulation de $\mathbf{R}^j p_n(F)$ pour  $0<j\leq\frac{n-d+1}{p^r}$ qu'on cherche à montrer (pour tout $F$ dans $\F_d(\A;\mathbb{F}_p)$) équivaut à dire que le noyau $N_n$ de la projection $\mathbb{F}_p[-]\twoheadrightarrow Q_n$ vérifie $\Hh_i^{(n)}(N_n\circ\A(a,-))=0$ pour tout $a\in {\rm Ob}\,\A$ et tout entier $i\leq\frac{n-d+1}{p^r}$. Puisque les foncteurs $\Hh_i^{(n)}$ sont stables par précomposition par un foncteur additif et commutent aux colimites filtrantes, il suffit de montrer que $\Hh_i^{(n)}(A_n)=0$, où $A_n$ désigne la restriction de $N_n$ à la sous-catégorie pleine $\mathfrak{A}$ de $\mathbf{Ab}$ des groupes abéliens {\em de type fini} dont la torsion $p$-primaire est bornée par $p^r$.

 En dualisant, on voit que ceci équivaut encore à l'annulation de $\Hh^i_{(n)}$ sur la restriction à $\mathfrak{A}^{op}$ de $Fct(-,\mathbb{F}_p)/Pol_n(-,\mathbb{F}_p) : \mathbf{Ab}^{op}\to\mathbb{F}_p-\mathbf{Mod}$, pour $i\leq\frac{n-d+1}{p^r}$. Mais la première partie de la démonstration montre que l'inclusion $\Pol_n\hookrightarrow\Pol_m$ pour $m\geq n$ induit un isomorphisme $\Hh^i_{(n)}(Pol_n(-,\mathbb{F}_p))\xrightarrow{\simeq}\Hh^i_{(n)}(Pol_m(-,\mathbb{F}_p))$ pour $i\leq\frac{n-d+1}{p^r}$ (la source des foncteurs étant $\mathfrak{A}^{op}$), de sorte qu'il suffit pour conclure d'établir que $\Hh^*_{(n)}(R)=0$, où $R$ désigne la restriction à $\mathfrak{A}^{op}$ du foncteur $Fct(-,\mathbb{F}_p)/Pol(-,\mathbb{F}_p) : \mathbf{Ab}^{op}\to\mathbb{F}_p-\mathbf{Mod}$.

On utilise pour cela le monomorphisme naturel (\ref{eqp}) qui est un isomorphisme sur les groupes abéliens de type fini. Comme $\Hh^i_{(n)}(Fct(-,\mathbb{F}_p))$ est le dual de $\Hh_i^{(n)}(\mathbb{F}_p[-])$, que ce foncteur est polynomial et que $\Hh^i_{(n)}$ commute à la précomposition par un foncteur additif, la conclusion découle donc des lemmes~\ref{lmf} et~\ref{lmf2} ci-dessous.
\end{proof}

\begin{rem}\begin{enumerate}
 \item En utilisant, dans la première partie de la démonstration, la suite exacte de Koszul classique et sa duale, on retrouve, dans le cas où les groupes abéliens de morphismes n'ont pas de $p$-torsion dans $\A$, la borne de Pirashvili (théorème~\ref{th-pira}).
\item Il existe plusieurs variantes de l'argument final, ne reposant pas sur les lemmes~\ref{lmf} et~\ref{lmf2}. L'une d'entre elles consiste à utiliser, lorsque $A : \A\to\mathbf{Ab}$ est un foncteur additif, une résolution projective de $\mathbb{F}_p[A]$ (dans $\F(\A;\mathbb{F}_p)$) à partir d'une résolution projective de $A$ dans la catégorie $\add(\A,\mathbf{Ab})$ et de la résolution bar. On déduit de cela aisément un isomorphisme
$${\rm Ext}^*_{\F(A;\mathbb{F}_p)}(\mathbb{F}_p[A],B)\simeq {\rm Ext}^*_{\add(\A,\mathbf{Ab})}(A,B)$$
lorsque $B : \A\to\mathbb{F}_p-\mathbf{Mod}$ est un autre foncteur additif (on désigne, par abus, encore par $B$ sa postcomposition par l'inclusion $\mathbb{F}_p-\mathbf{Mod}\to\mathbf{Ab}$). On notera que cet isomorphisme est relié de près au critère d'annulation cohomologique de Troesch (\cite{Tefc}, théorème~4). À partir de là, on obtient facilement le résultat pour $n=1$ ; le cas général s'en déduit par un jeu sur la structure des foncteurs polynomiaux (analogue à celui utilisé pour démontrer la dernière assertion du lemme~\ref{lmf}).
\end{enumerate}
\end{rem}

\begin{lm}\label{lmf}
 \begin{enumerate}
  \item La catégorie des foncteurs polynomiaux $\mathfrak{A}\to\mathbb{F}_p-\mathbf{Mod}$ est localement noethérienne.
\item Un foncteur polynomial $\mathfrak{A}\to\mathbb{F}_p-\mathbf{Mod}$ prenant des valeurs de dimension finie est noethérien.
\item Si $F : \mathfrak{A}\to\mathbb{F}_p-\mathbf{Mod}$ est un foncteur polynomial noethérien, alors le foncteur $F^\vee : \mathfrak{A}^{op}\to\mathbb{F}_p-\mathbf{Mod}$ (on rappelle que l'exposant $\vee$ indique la dualité des $\mathbb{F}_p$-espaces vectoriels) est fortement stationnaire (cf. définition~\ref{dfst}).
 \end{enumerate}

\end{lm}

(Dans ce paragraphe, on ne sera servira que du caractère stationnaire ; le caractère {\em fortement} stationnaire sera utilisé dans le paragraphe suivant.)

\begin{proof}
 Les deux premières assertions sont aisées (utiliser par exemple \cite{Dja-pol}, §\,4.3) à partir de l'observation que l'inclusion de la sous-catégorie pleine $\mathfrak{B}$ de $\mathfrak{A}$ des groupes abéliens dont toute la torsion est $p$-primaire (dont tous les objets sont sommes directes d'un nombre fini de copies d'objets en nombre fini, à savoir $\mathbb{Z}$ et $\mathbb{Z}/p^i$ pour $i\leq r$) induit une équivalence des foncteurs polynomiaux $\mathfrak{A}\to\mathbb{F}_p-\mathbf{Mod}$ vers les foncteurs polynomiaux $\mathfrak{B}\to\mathbb{F}_p-\mathbf{Mod}$ (cela découle de ce qu'une fonction polynomiale d'un groupe abélien de torsion, mais sans torsion $p$-primaire, vers $\mathbb{F}_p$ est constante).

On vérifie d'abord la troisième assertion lorsque $F$ est un foncteur additif libre $A_E:={\rm Hom}_\mathbb{Z}(E,-)\otimes\mathbb{F}_p$, puis un produit tensoriel de tels foncteurs, puis une somme directe finie de tels produits tensoriels. Si $F$ est un foncteur polynomial de degré $d$ noethérien, on peut trouver un morphisme d'une telle somme $S$ vers $F$ dont le conoyau $Q$ est de degré $<d$ (en effet, l'effet croisé $cr_d(F)(E_1,\dots,E_d)$ est naturellement isomorphe aux morphismes de $A_{E_1}\otimes\dots A_{E_d}$ vers $F$). Cela permet d'obtenir, par récurrence sur $d$, le résultat souhaité : le fait que la conclusion soit connue pour $F$ et $Q$ montre d'abord que le morphisme qu'on cherche à étudier est injectif pour $F$, foncteur polynomial noethérien de degré au plus $d$ quelconque, ensuite on utilise ce résultat pour le noyau $G$ du morphisme $S\to F$, qui est également polynomial noethérien de degré au plus $d$, ce qui permet de conclure par le lemme des cinq.
\end{proof}

\begin{lm}\label{lmf2}
  Pour tous entiers naturels $i$ et $n$, le foncteur $\Hh^{(n)}_i(\mathbf{F}_p[-])$ prend des valeurs de dimension finie sur les groupes abéliens de type fini.
\end{lm}

\begin{proof}
De façon tout à fait analogue à l'argument final utilisé dans la démonstration de la proposition~\ref{pz}, on voit qu'il existe un complexe de chaînes de foncteurs $\mathbf{Ab}\to\mathbb{F}_p-\mathbf{Mod}$ ayant les propriétés suivantes :
\begin{enumerate}
 \item son terme de degré $0$ est $\bar{\mathbb{F}}_p[-]$ ;
\item ses termes de degrés strictement positifs sont des sommes directes de foncteurs du type $\bar{\mathbb{F}}_p[-]^{\otimes t}$ avec $t>n$ ;
\item son homologie est en tout degré un foncteur (polynomial) prenant des valeurs de dimension finie sur les groupes abéliens de type fini. 
\end{enumerate}
On en déduit la conclusion en considérant la suite spectrale pour $\Hh^{(n)}_*$ associée : elle converge vers $\Hh^{(n)}_*(\bar{\mathbb{F}}_p[-])$ (parce que les termes de degré strictement positif du complexe ont un $\Hh^{(n)}_*$ nul par le lemme~\ref{lmpira}), et sa deuxième page est donnée par les $\Hh^{(n)}_i$ de l'homologie du complexe, qui prennent des valeurs de dimension finie sur les groupes abéliens de type fini.
\end{proof}

\begin{proof}[Démonstration de la proposition~\ref{pr3}]
 Comme corollaire du théorème~\ref{th1}, on obtient
$${\rm Ext}^*_{\F(\A/p^t;\mathbb{F}_p)}(F,G)\simeq\underset{r\geq d}{\col}{\rm Ext}^*_{\F_r(\A/p^t;\mathbb{F}_p)}(F,G)\;;$$
comme le foncteur d'inclusion $\F_r(\A/p^t;\mathbb{F}_p)\to\F_r(\A;\mathbb{F}_p)$ est une équivalence pour $p^t>r$, on en déduit la proposition.
\end{proof}

\subsection{Dernières démonstrations}

\begin{proof}[Démonstration de l'implication 1.$\Rightarrow$2. du théorème~\ref{th2}]
Il suffit de démontrer l'assertion {\em 2.} lorsque $F$ est polynomial (par un argument de colimite filtrante) et $G$ du type
$$G=p_\infty(I)\quad\text{où}\quad I=\underset{t\in E}{\prod}Fct(\A(-,a_t),\mathbb{F}_p)$$
où $(a_t)_{t\in E}$ est une famille d'objets de $\A$. En effet, tout foncteur analytique possède une corésolution par des foncteurs de ce type.

Comme $I$ est injectif dans $\F(\A;\mathbb{F}_p)$, $G$ est injectif dans $\F_\infty(\A;\mathbb{F}_p)$, de sorte qu'il s'agit de vérifier que ${\rm Ext}^*_{\F(\A;\mathbb{F}_p)}(F,I/G)=0$ (pour tout $F$ polynomial), ou encore que $\Hh^*_{(n)}(I/G)=0$ pour tout $n\in\mathbb{N}$, c'est-à-dire que l'inclusion $G\hookrightarrow I$ induit un isomorphisme $\Hh^*_{(n)}(G)\to\Hh^*_{(n)}(I)$.

Comme les foncteurs $\Hh_{(n)}^*$ commutent aux produits, on a
$$\Hh_{(n)}^*(I)\simeq\underset{t\in E}{\prod}H^*_{(n)}\circ\A(-,a_t)$$
où $H^*_{(n)}:=\Hh_{(n)}^*(Fct(-,\mathbb{F}_p))$.

Par ailleurs, comme
$$G=\underset{d\in\mathbb{N}}{\col}p_d\Big(\underset{t\in E}{\prod}Fct(\A(-,a_t),\mathbb{F}_p)\Big)=\underset{d\in\mathbb{N}}{\col}\Big(\underset{t\in E}{\prod}Pol_d(\A(-,a_t),\mathbb{F}_p)\Big)$$
(car les foncteurs $p_d$ commutent aux produits pour $d$ fini), qui s'identifie encore à
$$\underset{d\in\mathbb{N}}{\col}\Big(\underset{t\in E}{\prod}Pol_d(\A(-,a_t),\mathbb{F}_p)\Big)=\underset{(d,i)\in\mathbb{N}^2}{\col}\Big(\underset{t\in E}{\prod}Pol_d(\A(-,a_t)/p^i,\mathbb{F}_p)\Big).$$

Maintenant, les résultats de la section précédente (voir la fin de la démonstration du théorème~\ref{th1}) montrent que l'inclusion induit un isomorphisme
$$\Hh_{(n)}^m(Pol_d(\A(-,a_t)/p^i,\mathbb{F}_p))\to\Hh_{(n)}^m(Fct(\A(-,a_t)/p^i,\mathbb{F}_p))$$
si $mp^i\leq d-n+1$. En utilisant la commutation de $\Hh_{(n)}^m$ aux produits et aux colimites filtrantes, on en déduit :
$$\Hh_{(n)}^m(G)\simeq\underset{i\in\mathbb{N}}{\col}\Big(\underset{t\in E}{\prod}H_{(n)}^m\circ(\A(-,a_t)/p^i)\Big).$$
Via les isomorphismes présents, notre morphisme $\Hh_{(n)}^m(G)\to\Hh_{(n)}^m(I)$ s'identifie au morphisme canonique
$$\underset{i\in\mathbb{N}}{\col}\Big(\underset{t\in E}{\prod}H_{(n)}^m\circ(\A(-,a_t)/p^i)\Big)\to\underset{t\in E}{\prod}H_{(n)}^m\circ\A(-,a_t).$$

La conclusion provient donc des lemmes~\ref{lmf} et~\ref{lmf2}.
\end{proof}

\begin{lm}\label{lm-eml1}
 Le foncteur $\Hh^{(1)}_1(\mathbb{Z}[-]) : \mathbf{Ab}\to\mathbf{Ab}$ est nul.
\end{lm}

\begin{proof}
 La projection canonique $\bar{\mathbb{Z}}[-]\to {\rm Id}$ induit un isomorphisme en appliquant ${\rm Ext}^i(-,A)$ pour $i\leq 1$ et $A$ additif (car la sous-catégorie des foncteurs additifs est épaisse, et Id est projectif dedans puisque représentable par $\mathbb{Z}$), de sorte que $\Hh_1^{(1)}(\bar{\mathbb{Z}}[-])\to\Hh^{(1)}_1(A)$ est un isomorphisme, mais le but est nul car Id est un foncteur de degré $1$.
\end{proof}

\begin{rem}\begin{enumerate}
            \item Cette assertion est équivalente au fait que l'homologie stable de degré $1$ des espaces d'Eilenberg-Mac Lane est nulle.
\item On a commis l'abus de raisonner sur la catégorie des endofoncteurs des groupes abéliens, qui pourrait poser des problèmes ensemblistes, mais cela ne pose pas de problème, ne serait-ce que parce qu'on peut se restreindre aux foncteurs commutant aux colimites filtrantes (qui forment une sous-catégorie équivalente aux foncteurs des groupes abéliens de type fini vers $\mathbf{Ab}$).
           \end{enumerate}
\end{rem}

Conservant la notation $H^*_{(n)}$ employée dans la démonstration précédant le lemme, on en déduit, en utilisant la suite exacte des coefficients universels :
\begin{lm}\label{lm-eml2}
 Le foncteur $H^1_{(1)} : \mathbf{Ab}^{op}\to\mathbb{F}_p-\mathbf{Mod}$ est isomorphe à ${\rm Ext}^1_\mathbb{Z}(-,\mathbb{Z}/p)$.
\end{lm}

\begin{proof}[Démonstration de la proposition~\ref{pr-ext2}]
 Pour l'isomorphisme du bas, on utilise les suites spectrales d'hypercohomologie convergeant vers
$$\mathbf{H}{\rm Ext}^*_{\F(\A;\mathbb{F}_p)}(A,D^*_{(1)}(I))$$
où $I$ est l'injectif $\prod_i \overline{Fct}(\A(-,a_i),\mathbb{F}_p)$ (la barre signifiant qu'on ne considère que les fonctions nulles en $0$) de $\F(\A;\mathbb{F}_p)$.

Leur aboutissement commun est nul en degré total strictement positif, car l'une des deux suites spectrales part de
$${\rm Ext}^*_{\F(\A;\mathbb{F}_p)}(A,D^*_{(1)}(I))\simeq {\rm Hom}_{\F(\A;\mathbb{F}_p)}(D_*^{(1)}(A),I),$$
et $D_*^{(1)}(A)$ est nul en degré $*>0$ pour $A$ additif.

Quant à l'autre, sa deuxième page est donnée par
$${\rm E}_2^{p,q}={\rm Ext}^p_{\F(\A;\mathbb{F}_p)}(A,\Hh^q_{(1)}(I)).$$
Comme son aboutissement est nul en degré total $1$, la différentielle
$$d_2^{0,1} : {\rm Hom}_{\F(\A;\mathbb{F}_p)}(A,\Hh^1_{(1)}(I))\to{\rm Ext}^2_{\F(\A;\mathbb{F}_p)}(A,\Hh^0_{(1)}(I))$$
est un isomorphisme. Comme $\Hh^0_{(1)}(I)\simeq p_1(I)\simeq\prod_i {\rm Hom}_\mathbb{Z}(\A(-,a_i),\mathbb{F}_p)=T$ et que $\Hh^1_{(1)}(I)\simeq {\rm Ext}^1_\mathbb{Z}(-,\mathbb{Z}/p)\circ T$ grâce au lemme~\ref{lm-eml2}, on obtient l'isomorphisme recherché.

L'isomorphisme
$$\underset{d\in\mathbb{N}^*}{\col}{\rm Ext}^2_{\F_d(\A)}(A,{\rm Hom}_\mathbb{Z}(T,\mathbb{F}_p))\simeq\underset{i\in\mathbb{N}}{\col}{\rm Hom}_{\F(\A)}(A,{\rm Ext}^1_\mathbb{Z}(-,\mathbb{Z}/p)\circ (T/p^i))$$
s'en déduit en utilisant la proposition~\ref{pr3}.

Quant au dernier isomorphisme, on l'obtient en raisonnant comme au début de la démonstration, mais en remplaçant $I$ par $p_\infty(I)$ et en travaillant dans $\F_\infty(\A;\mathbb{F}_p)$ (avec les mêmes arguments que d'habitude pour passer de $Pol(-,\mathbb{F}_p)$ à $\underset{i}{\col}Fct(-,\mathbb{F}_p)\circ (-/p^i)$).
\end{proof}

\begin{proof}[Fin de la démonstration du théorème~\ref{th2}]
 L'implication $2.\Rightarrow 3.$ est évidente et l'implication $3.\Rightarrow 1.$ découle de la proposition~\ref{pr-ext2} et du lemme~\ref{lmelc}. 
\end{proof}

\begin{rem}\label{rqf}
\begin{enumerate}
 \item En raisonnant de façon duale, on obtient un isomorphisme naturel
\begin{equation}\label{ex2}
 {\rm Ext}^2_{\F(\A;\mathbb{F}_p)}(\A(a,-)\otimes\mathbb{F}_p,A)\simeq {\rm Hom}_{\F(\A;\mathbb{F}_p)}({\rm Hom}_\mathbb{Z}(\mathbb{Z}/p,-)\circ\A(a,-),A)
\end{equation}
pour tous $a\in {\rm Ob}\,\A$ et $A\in {\rm Ob}\,\F_1(\A;\mathbb{F}_p)$, dont on déduit
$$\underset{d\in\mathbb{N}^*}{\col}{\rm Ext}^2_{\F_d(\A;\mathbb{F}_p)}(\A(a,-)\otimes\mathbb{F}_p,A)\simeq\underset{i\in\mathbb{N}}{\col}{\rm Hom}_{\F(\A;\mathbb{F}_p)}({\rm Hom}_\mathbb{Z}(\mathbb{Z}/p,-)\circ (\A(a,-)/p^i),A).$$
\item En utilisant ce qui précède, ou les isomorphismes de la proposition~\ref{pr-ext2}, et le lemme~\ref{lmelc}, on voit facilement que, si le morphisme canonique
$$\underset{d\in\mathbb{N}^*}{\col}{\rm Ext}^2_{\F_d(\A;\mathbb{F}_p)}(A,B)\to{\rm Ext}^2_{\F(\A;\mathbb{F}_p)}(A,B)$$
est un isomorphisme pour tous foncteurs additifs $A$ et $B$, alors il existe un entier $r$ tel que la torsion de tous les groupes abéliens $\A(a,b)$ soit bornée par $p^r$ (prendre pour $A$ une grosse somme directe de foncteurs additifs libres, et pour $B$ un gros produit direct de foncteurs additifs colibres, comme dans la proposition~\ref{pr-ext2}). En revanche, cette condition est vérifiée sous la première condition (plus faible) du théorème~\ref{th2}, si l'on se restreint aux foncteurs $A$ (additifs) de type fini.
\item Si l'on remplace l'anneau de coefficients $\mathbb{F}_p$ par $\mathbb{Z}$ au but, il ne se passe rien sur le ${\rm Ext}^2$ entre foncteurs additifs : les morphismes naturels
$${\rm Ext}^2_{\F_1(\A;\mathbb{Z})}(A,B)\to {\rm Ext}^2_{\F_d(\A;\mathbb{Z})}(A,B)\to {\rm Ext}^2_{\F(\A;\mathbb{Z})}(A,B)$$
sont des isomorphismes pour tous $d\geq 1$ et tous $A, B\in {\rm Ob}\,\F_1(\A;\mathbb{Z})$ (utiliser le lemme~\ref{lm-eml1}). C'est à partir du groupe ${\rm Ext}^3$ que les <<~pathologies~>> observées précédemment lorsque la torsion $p$-primaire n'est pas bornée (pour un certain nombre premier $p$) se manifestent.
\item\label{rfff} L'isomorphisme (\ref{ex2}) montre également que ${\rm Ext}^2_{\F(\A;\mathbb{F}_p)}(\A(a,-)\otimes\mathbb{F}_p,-)$, qui s'identifie à l'évaluation en $a$ de $\mathbf{R}^2 p_1$ (cf. (\ref{eqa1})), commute aux colimites filtrantes de monomorphismes (resp. aux colimites filtrantes) entre foncteurs additifs si et seulement si le foncteur additif ${\rm Hom}_\mathbb{Z}(\mathbb{Z}/p,-)\circ\A(a,-)$ est de type fini (resp. de présentation finie). Ce n'est pas toujours le cas (ce qui illustre la remarque~\ref{rcfm}), comme le montre l'exemple suivant.

Considérons la catégorie préadditive ayant une infinité dénombrable d'objets $a, t_0, t_1, \dots,t_n,\dots$ avec
$${\rm End}\,(t_n)=\mathbb{Z}/p,\; {\rm Hom}\,(t_n,a)=0\;, {\rm Hom}\,(a,t_n)=\mathbb{Z}/p\;\text{ pour tout }n,$$
$${\rm End}\,(a)=\mathbb{Z}, \text{ et }{\rm Hom}\,(t_i,t_j)=0\text{ si }i\neq j$$
(il existe une et une seule composition bilinéaire). En saturant cette catégorie par des sommes directes formelles finies, on obtient une petite catégorie additive $\A$, dans laquelle tous les groupes abéliens de morphismes sont des groupes abéliens de type fini. Le foncteur ${\rm Hom}_\mathbb{Z}(\mathbb{Z}/p,-)\circ\A(a,-)$ est isomorphe à la somme directe sur les entiers naturels $n$ de $\A(t_n,-)$, il n'est donc pas de type fini.
\end{enumerate}

%
\end{rem}

%

\bibliographystyle{plain}
\bibliography{bibli-polext.bib}
\end{document}